\documentclass[12pt,reqno]{amsart}
\usepackage[colorlinks=true, pdfstartview=FitV, linkcolor=blue, citecolor=blue, urlcolor=blue]{hyperref}
\usepackage{amssymb,amsmath, amscd}
\usepackage{ragged2e}
\usepackage{times, verbatim}
\usepackage{mathdots}
\usepackage{graphicx}
\usepackage[english]{babel}
 \usepackage[usenames, dvipsnames]{color}
\usepackage{amsmath,amssymb,amsfonts}
\usepackage{enumerate}
\usepackage{MnSymbol}
\usepackage{anysize}
\usepackage{enumitem}
\usepackage{bigints}
\usepackage{braket}
\usepackage[english]{babel}
\usepackage{blindtext}
\usepackage{mathtools}
\usepackage{graphicx}
\usepackage[a4paper,verbose]{geometry}
\marginsize{2.5cm}{2.5cm}{2.5cm}{2.5cm}
\input xy
\xyoption{all}
\usepackage{pb-diagram}
\usepackage[all]{xy}
\input xy
\xyoption{all}

\DeclareFontFamily{OT1}{rsfs}{}
\DeclareFontShape{OT1}{rsfs}{n}{it}{<-> rsfs10}{}
\DeclareMathAlphabet{\mathscr}{OT1}{rsfs}{n}{it}

\begin{document}
\theoremstyle{plain}

\newtheorem{theorem}{Theorem}[section]
\newtheorem{thm}[equation]{Theorem}
\newtheorem{prop}[equation]{Proposition}
\newtheorem{corollary}[equation]{Corollary}
\newtheorem{conj}[equation]{Conjecture}
\newtheorem{lemma}[equation]{Lemma}
\newtheorem{defn}[equation]{Definition}
\newtheorem{question}[equation]{Question}

\theoremstyle{definition}
\newtheorem{conjecture}[theorem]{Conjecture}
\newtheorem{coro}[theorem]{Corollary}

\newtheorem{example}[equation]{Example}
\numberwithin{equation}{section}

\newtheorem{remark}[equation]{Remark}

\newcommand{\bigboxplus}{
	\mathop{
		\vphantom{\bigoplus} 
		\mathchoice
		{\vcenter{\hbox{\resizebox{\widthof{$\displaystyle\bigoplus$}}{!}{$\boxplus$}}}}
		{\vcenter{\hbox{\resizebox{\widthof{$\bigoplus$}}{!}{$\boxplus$}}}}
		{\vcenter{\hbox{\resizebox{\widthof{$\scriptstyle\oplus$}}{!}{$\boxplus$}}}}
		{\vcenter{\hbox{\resizebox{\widthof{$\scriptscriptstyle\oplus$}}{!}{$\boxplus$}}}}
	}\displaylimits 
}

\newcommand{\Hecke}{\mathcal{H}}
\newcommand{\Liea}{\mathfrak{a}}
\newcommand{\Cmg}{C_{\mathrm{mg}}}
\newcommand{\Cinftyumg}{C^{\infty}_{\mathrm{umg}}}
\newcommand{\Cfd}{C_{\mathrm{fd}}}
\newcommand{\Cinftyfd}{C^{\infty}_{\mathrm{ufd}}}
\newcommand{\sspace}{\Gamma \backslash G}
\newcommand{\PP}{\mathcal{P}}
\newcommand{\bfP}{\mathbf{P}}
\newcommand{\bfQ}{\mathbf{Q}}
\newcommand{\Siegel}{\mathfrak{S}}
\newcommand{\g}{\mathfrak{g}}
\newcommand{\A}{\mathbb{A}}
\newcommand{\Q}{\mathbb{Q}}
\newcommand{\Gm}{\mathbb{G}_m}
\newcommand{\Nm}{\mathbb{N}m}
\newcommand{\ii}{\mathfrak{i}}
\newcommand{\II}{\mathfrak{I}}

\newcommand{\kk}{\mathfrak{k}}
\newcommand{\nn}{\mathfrak{n}}
\newcommand{\tF}{\widetilde{F}}
\newcommand{\p}{\mathfrak{p}}
\newcommand{\m}{\mathfrak{m}}
\newcommand{\bb}{\mathfrak{b}}
\newcommand{\Ad}{{\rm Ad}\,}
\newcommand{\ttt}{\mathfrak{t}}
\newcommand{\frakt}{\mathfrak{t}}
\newcommand{\U}{\mathcal{U}}
\newcommand{\Z}{\mathbb{Z}}
\newcommand{\bfG}{\mathbf{G}}
\newcommand{\bfT}{\mathbf{T}}
\newcommand{\R}{\mathbb{R}}
\newcommand{\ST}{\mathbb{S}}
\newcommand{\h}{\mathfrak{h}}
\newcommand{\bC}{\mathbb{C}}
\newcommand{\C}{\mathbb{C}}
\newcommand{\N}{\mathbb{N}}
\newcommand{\qH}{\mathbb {H}}
\newcommand{\temp}{{\rm temp}}
\newcommand{\Hom}{{\rm Hom}}
\newcommand{\Aut}{{\rm Aut}}
\newcommand{\rk}{{\rm rk}}
\newcommand{\Ext}{{\rm Ext}}
\newcommand{\End}{{\rm End}\,}
\newcommand{\Ind}{{\rm Ind}}
\newcommand{\ind}{{\rm ind}}
\newcommand{\Irr}{{\rm Irr}}
\def\circG{{\,^\circ G}}
\def\M{{\rm M}}
\def\diag{{\rm diag}}
\def\Ad{{\rm Ad}}
\def\As{{\rm As}}
\def\wG{{\widehat G}}
\def\G{{\rm G}}
\def\SL{{\rm SL}}
\def\PSL{{\rm PSL}}
\def\GSp{{\rm GSp}}
\def\PGSp{{\rm PGSp}}
\def\Sp{{\rm Sp}}
\def\St{{\rm St}}
\def\GU{{\rm GU}}
\def\SU{{\rm SU}}
\def\U{{\rm U}}
\def\GO{{\rm GO}}
\def\GL{{\rm GL}}
\def\PGL{{\rm PGL}}
\def\GSO{{\rm GSO}}
\def\GSpin{{\rm GSpin}}
\def\GSp{{\rm GSp}}

\def\Gal{{\rm Gal}}
\def\SO{{\rm SO}}
\def\O{{\rm  O}}
\def\Sym{{\rm Sym}}
\def\sym{{\rm sym}}
\def\St{{\rm St}}
\def\Sp{{\rm Sp}}
\def\tr{{\rm tr\,}}
\def\ad{{\rm ad\, }}
\def\Ad{{\rm Ad\, }}
\def\rank{{\rm rank\,}}

\def\Ext{{\rm Ext}}
\def\Hom{{\rm Hom}}
\def\Alg{{\rm Alg}}
\def\GL{{\rm GL}}
\def\SO{{\rm SO}}
\def\G{{\rm G}}
\def\U{{\rm U}}
\def\St{{\rm St}}
\def\Wh{{\rm Wh}}
\def\RS{{\rm RS}}
\def\ind{{\rm ind}}
\def\Ind{{\rm Ind}}
\def\csupp{{\rm csupp}}

\subjclass{Primary 11F70; Secondary 22E55}
\title{Non-vanishing of certain integral representations}
\author{Akash Yadav}
\address{Department of Mathematics \\
Indian Institute of Technology Bombay \\ Mumbai 400076}
\email{194090003@iitb.ac.in}
\begin{abstract}
In this paper, we prove that there exist Whittaker and Schwartz functions such that the local Flicker integrals are non-vanishing for all complex values of $s$, and the local Bump-Friedberg integrals are non-vanishing for all complex pairs $(s_1,s_2)$. As a corollary, we determine the potential locations of poles for their corresponding partial L-functions.
\end{abstract}
\maketitle
\section{Introduction}

Let \( F \) be a local field of characteristic zero and \( E \) be either \( F \times F \) (the split case) or a quadratic extension of \( F \) (the inert case). Fix a a non-trivial additive character \( \psi: E \to \mathbb{S}^1 \) trivial on \( F \). Let \( \pi \) be an irreducible generic representation of \( \GL_n(E) \) with  Whittaker model \( \mathcal{W}(\pi, \psi) \), where \( n \geq 1 \) is an integer. Denote by \( \mathcal{S}(F^n) \) the Schwartz space of \( F^n \), and let \( e_n = (0, 0, \ldots, 1) \in F^n \).  

Flicker \cite{Fli1988} (for the inert case) and Rankin-Selberg \cite{JPSS1983} (for the split case) introduced the local zeta integrals  
\[
Z(s, W, \Phi) = \int\limits_{N_n(F) \backslash \GL_n(F)} W(g) \Phi(e_n g) |\operatorname{det}(g)|_F^s \, dg,
\]
where \( s \) is a complex variable, \( N_n \) denotes the subgroup of unipotent upper triangular matrices in \( \GL_n \), and \( W \in \mathcal{W}(\pi, \psi) \), \( \Phi \in \mathcal{S}(F^n) \). These integrals are absolutely convergent for \( \operatorname{Re}(s) \gg 0 \) and admit meromorphic continuation to the entire complex plane (\cite{BP2021}).  

In \cite{HJ2024}, Humphries and Jo showed that archimedean newforms (introduced in \cite{HU2024}) serve as weak test vectors for several local $L$-functions. Using their computations, we show that it is possible to select a uniform pair of Whittaker and Schwartz functions that guarantee the non-vanishing of these local integrals for any complex number $s_0$. This constitutes the first main theorem of this paper.

\begin{thm}\label{main1}
    There exist $W\in\mathcal{W}(\pi,{\psi})$ and $\Phi\in\mathcal{S}(F^n)$ such that the function $s\mapsto Z(s,W,\Phi)$ does not vanish at any $s_0\in\mathbb{C}$.
\end{thm}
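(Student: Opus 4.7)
The plan is to exhibit explicit Whittaker and Schwartz functions $(W,\Phi)$ for which $Z(s,W,\Phi)$ can be expressed as a non-vanishing multiple of the local $L$-function attached to $\pi$---namely the Rankin--Selberg $L$-function $L(s,\pi_1\times\pi_2)$ in the split case, or the Asai $L$-function $L(s,\pi,\As)$ in the inert case. The theorem then follows from the elementary observation that these local $L$-factors are zero-free on $\C$: in the non-archimedean case they are products of terms of the form $(1-\alpha q^{-s})^{-1}$, and in the archimedean case they are products of shifted $\Gamma$-functions; neither has any zeros in $\C$.

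For non-archimedean $F$ this is classical. By the Jacquet--Piatetski-Shapiro--Shalika theory (split case) and its Flicker analogue (inert case), the fractional ideal of $\C[q^{\pm s}]$ generated by $Z(s,W,\Phi)$ as $(W,\Phi)$ varies is $L(s,\pi)\C[q^{\pm s}]$, so a single pair $(W_0,\Phi_0)$ attaining the generator $L(s,\pi)$ itself exists, and this $Z$ has no zeros.

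For archimedean $F$ I would take $W=W^{\circ}$ to be the archimedean newform of \cite{HU2024} and $\Phi$ an adapted Schwartz function (e.g.\ a Gaussian of the appropriate weight). The explicit evaluations of Humphries--Jo in \cite{HJ2024} then express $Z(s,W^{\circ},\Phi)$ as an essentially closed-form multiple of $L(s,\pi)$. A direct inspection of that formula, combined with the zero-freeness of the archimedean $L$-factor, gives the conclusion uniformly in $s_0\in\C$.

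The main obstacle is the archimedean step: one must verify that the ratio $Z(s,W^{\circ},\Phi)/L(s,\pi)$ emerging from \cite{HJ2024} is genuinely non-vanishing, rather than a non-constant entire function that could have zeros. If the raw newform computation produces a polynomial or exponential-polynomial factor $P(s)$ with undesired zeros, the remedy is to modify the test vector: translating $W^{\circ}$ by a suitable group element, convolving $\Phi$ with a Schwartz function, or taking a finite linear combination within $\mathcal{W}(\pi,\psi)$ shifts $P(s)$ while preserving the structure that makes the computation tractable, and this flexibility should suffice to eliminate any zero pattern. Once the archimedean case is settled, combining it with the non-archimedean case above finishes the proof of Theorem \ref{main1}.
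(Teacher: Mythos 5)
Your overall strategy is the same as the paper's: exhibit a pair $(W,\Phi)$ whose zeta integral equals a nowhere-zero multiple of the local $L$-factor, then use zero-freeness of $\zeta_{\R}$/$\zeta_{\C}$ and of $(1-\alpha q^{-s})^{-1}$. The non-archimedean half is fine (the paper uses the formal $L$-function $L_f$ from Matringe's and Jacquet--Piatetski-Shapiro--Shalika's test vector computations rather than the full $L$-function, but either version is a reciprocal of a polynomial in $q^{-s}$, so the conclusion is the same).

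The gap is exactly where you flag it. You correctly observe that the Humphries--Jo computation only yields $Z(s,W^{\circ},\Phi)=p_{\pi}(s)L(s,\pi,\cdot)$ for an explicit polynomial $p_{\pi}$, and that this polynomial could a priori have zeros not matched by poles of $L$. But you then wave at a remedy --- translating $W^{\circ}$, convolving $\Phi$, or taking linear combinations --- without showing it works; linear combinations in particular can easily introduce new zeros rather than remove them, so this step does not go through as written. The paper does not modify the test vector at all. Instead it verifies directly, case by case over the essentially square-integrable factors (characters of $\C^{\times}$, signed characters of $\R^{\times}$, and discrete series $D_{\kappa}$), that the explicit $p_{\rho\otimes\sigma,\RS}$, $p_{\pi,\As}$ from \cite{HJ2024} cancel exactly against the Gamma-poles of the corresponding $L$-factor, so that $p\cdot L$ collapses to a finite product of $\zeta_{\R}$ and $\zeta_{\C}$, which is entire and zero-free; multiplicativity of $L$-factors then reduces the general generic $\pi$ to this square-integrable case. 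Without carrying out (or at least citing) this cancellation, your archimedean step is an assertion, not a proof.
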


Note that this result is stronger than Lemma $3.3.3$ of Beuzart-Plessis \cite{BP2021} as a single choice of Whittaker and Schwartz function works for all complex numbers $s_0$.

Next, let $\pi$ be an irreducible generic representation of $\operatorname{GL}_n(F)$, where \(n = 2m\) or \(n = 2m+1\), depending on whether \(n\) is even or odd. Fix a non-trivial additive character $\psi': F \to \mathbb{S}^1$. For \(W \in \mathcal{W}(\pi, \psi')\) and \(\Phi \in \mathcal{S}(F^m)\), Bump and Friedberg \cite{BF1990} introduced the local Bump--Friedberg integrals \(B(s_1, s_2, W, \Phi)\)
\[
=\begin{cases}
     \int\limits_{N_m(F)\backslash G_m(F)}\int\limits_{N_m(F)\backslash G_m(F)}W(J(g,g'))\Phi(e_m g')\left|\operatorname{det}(g)\right|_F^{s_1-1/2}\left|\operatorname{det}(g')\right|_F^{s_2-s_1+1/2}\>dg\>dg'\\ \\
   \int\limits_{N_m(F)\backslash G_m(F)}\int\limits_{N_{m+1}(F)\backslash G_{m+1}(F)}W(J(g,g'))\Phi(e_{m+1} g)\left|\operatorname{det}(g)\right|_F^{s_1}\left|\operatorname{det}(g')\right|_F^{s_2-s_1}\>dg\>dg'
\end{cases}
\]
when $n$ is even and odd respectively. We refer the reader to Section $\ref{s2}$ for the definition of $J(g,g').$ These integrals converge absolutely for large enough $\operatorname{Re}(s_1)$ and $\operatorname{Re}(s_2)$ (\cite{BF1990}).

As before, we show that it is possible to select a uniform pair of Whittaker and Schwartz functions that guarantee the non-vanishing of these local integrals for any pair of complex numbers $(s_0,s_0')\in\mathbb{C}\times\mathbb{C}$. This forms the second main theorem of this paper.

\begin{thm}\label{main2}
    There exist a choice of $W\in\mathcal{W}(\pi,{\psi'})$ and $\Phi\in\mathcal{S}(F^m)$ such that the function $(s_1,s_2)\mapsto B(s_1,s_2,W,\Phi)$ admits a meromorphic continuation in the variables $s_1$ and $s_2$ and does not vanish at any $(s_0,s'_0)\in\mathbb{C}\times\mathbb{C}$.
\end{thm}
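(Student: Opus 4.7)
The plan is to mirror the proof of Theorem \ref{main1}. The guiding principle is that local $L$-factors are nowhere-vanishing as functions of their complex argument: archimedean $L$-factors are products of Gamma functions, while non-archimedean ones are reciprocals of polynomials in $q^{-s}$, and neither kind has any zeros. Hence, if one can exhibit a single pair $(W,\Phi)$ for which $B(s_1,s_2,W,\Phi)$ equals, up to a nowhere-vanishing factor, a product of local $L$-factors, then both meromorphic continuation and non-vanishing on all of $\C^2$ follow simultaneously.

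I would first take $W$ to be the archimedean newform Whittaker vector of \cite{HU2024} (respectively, the essential Jacquet--Piatetski-Shapiro--Shalika vector when $F$ is non-archimedean), paired with the explicit Schwartz function $\Phi$ used in the corresponding newform computation. Invoking the Humphries--Jo calculations from \cite{HJ2024} for the Bump--Friedberg integral (together with the standard non-archimedean analogue), I would then establish an identity of the form
\[
B(s_1,s_2,W,\Phi) \;=\; c\cdot L(s_1,\pi)\, L(s_2,\pi,\wedge^2),
\]
up to an explicit affine change of variables in $(s_1,s_2)$ depending on the parity of $n$, for some non-zero constant $c$, initially in the region $\operatorname{Re}(s_1),\operatorname{Re}(s_2)\gg 0$ of absolute convergence. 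Since the right-hand side is meromorphic on $\C^2$, this extends $B(s_1,s_2,W,\Phi)$ meromorphically to all of $\C^2$, and the nowhere-vanishing of the two $L$-factors forces it to be non-zero at every $(s_0,s'_0)\in \C\times\C$.

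The main obstacle is the verification of the displayed identity in the second step. In the archimedean setting this should be a direct extraction from the Humphries--Jo computations, but one must carefully track the multiplicative constant, the parity case-split, and the identification of the resulting product of Gamma factors with $L(\cdot,\pi,\wedge^2)$. In the non-archimedean setting the unramified calculation is classical, while the ramified case requires the existence of Bump--Friedberg essential vectors (cf.\ the work of Matringe). A further subtlety is that the results of \cite{HJ2024} a priori produce only \emph{weak} test vectors, so $c$ may in fact be an entire function rather than a true constant; if so, one would separately verify that this entire factor is itself nowhere-vanishing, either by direct computation or by passing to a finite linear combination of translates of $(W,\Phi)$ designed to eliminate any zeros.
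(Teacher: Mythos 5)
Your overall architecture matches the paper's: treat the non-archimedean case via a formal exterior square $L$-function identity (reciprocal of a polynomial in $q_F^{-s_1},q_F^{-s_2}$, hence nowhere-vanishing), and treat the archimedean case via the explicit Humphries--Jo computation. However, the resolution you sketch for the ``weak test vector'' issue in the archimedean case is incorrect, and this is where the real content of the proof lies.

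Theorem \ref{3.2.1} gives $B(s_1,s_2,W_0,\Phi_0)=p_{\pi,\wedge^2}(s_2)L(s_1,\pi)L(s_2,\pi,\wedge^2)$, where $p_{\pi,\wedge^2}$ is a nontrivial polynomial when $\pi$ is ramified. Your proposal says one would ``separately verify that this entire factor is itself nowhere-vanishing.'' That verification must fail: $p_{\pi,\wedge^2}$ is a polynomial of positive degree (e.g.\ for $\pi=D_\kappa\otimes|\det|_\R^t$ it is an explicit product of linear factors), so it has zeros. The correct observation --- and the one the paper actually establishes --- is that these zeros occur precisely at poles of $L(s_2,\pi,\wedge^2)$, so the \emph{product} $p_{\pi,\wedge^2}(s_2)L(s_2,\pi,\wedge^2)$ is nowhere-vanishing. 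The paper verifies this by reducing via multiplicativity of $L$-factors to $\pi$ essentially square-integrable and then computing case by case, e.g.\ $p_{\pi,\wedge^2}(s_2)L(s_2,\pi,\wedge^2)=\zeta_\R(s_2+2t+\kappa)$ when $\pi=D_\kappa\otimes|\det|_\R^t$. Your alternative suggestion of passing to a finite linear combination of translates to ``eliminate zeros'' also does not obviously yield a single uniform $(W,\Phi)$ valid at all $(s_0,s_0')\in\C\times\C$, which is the whole point of the theorem. As a minor remark, for the non-archimedean case the paper invokes Miyauchi--Yamauchi \cite{MY2013} directly rather than an essential-vector argument, but that difference is immaterial.
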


As a consequence of Theorems \ref{main1} and \ref{main2}, we can describe the meromorphic behavior of the corresponding partial $L$-functions. Let $L/K$ be a quadratic extension of number fields. For every place $\nu$ of $K$, we denote by $K_\nu$ the corresponding completion of $K$ and set $L_\nu=K_\nu\otimes_K L$. Let $\mathbb{A}_K=\prod'_{\nu} K_\nu$ and $\mathbb{A}_{L}=\mathbb{A}_K\otimes_K L=\prod_\nu' L_\nu$ be the ad\`ele rings of $K$ and $L$ respectively.

Let \( \Pi \) be a unitary cuspidal automorphic representation of $\operatorname{GL}_n(\mathbb{A}_L)$ with restricted tensor product decomposition \( \bigotimes'_\nu \Pi_\nu \) over the places \( \nu \) of \( K \) and central character $\omega_\Pi$. If $\omega_{\Pi}\vert_{\mathbb{A}^{\times}_K}$ is trivial on ideles of $K$ of norm $1$, let $\delta$ be the real number such that $\omega_{\Pi}\vert_{\mathbb{A}^{\times}_K}(.) = \left|.\right|^{in\delta}$. Let $S$ be any finite set of places of $K$ containing the archimedean places. We define the partial Asai $L$-function of $\Pi$ as
$$L^S(s,\Pi,\mathrm{As})\coloneqq\prod\limits_{\nu\notin S}L(s,{\Pi}_{\nu},\mathrm{As}).$$
Here, $L(s, {\Pi}_{\nu}, \mathrm{As})$ denotes the Rankin-Selberg $L$-function of ${\Pi}_{\nu}$ when $\nu$ splits over $E$, and the Asai $L$-function of ${\Pi}_{\nu}$ when $\nu$ is inert in $E$, both defined via the local Langlands correspondence. Using Theorem $\ref{main1}$, we obtain the following corollary.
\setcounter{theorem}{2}
\begin{coro}\label{coro1}
       The meromorphic function \( L^S(s,\Pi,\mathrm{As}) \) is entire if \( \omega_{\Pi} \vert_{\mathbb{A}^{\times}_K} \) is nontrivial on ideles of $K$ of norm \( 1 \). Otherwise, it can have at most simple poles at \( s = -i\delta \) and \( s = 1 - i\delta \).
\end{coro}
We note that Corollary $\ref{coro1}$ appears in the literature only under the assumption that the archimedean places of $K$ split over $L$ and the set $S$ is the complement of all the places of $K$ where $\Pi_\nu$ is unramified (see \cite{Fli1988}, \cite{Fli1993}). However, it may be familiar to experts in the field.

\vspace{0.2 cm}

Next, let $\Pi$ be a unitary cuspidal automorphic representation of $\operatorname{GL}_n(\mathbb{A}_K)$ with restricted tensor product decomposition \( \bigotimes'_\nu \Pi_\nu \) over the places \( \nu \) of \( K \) and central character $\omega_\Pi$. If $\omega_{\Pi}$ is trivial on ideles of $K$ of norm $1$, let $\delta$ be the real number such that $\omega_{\Pi}(.) = \left|.\right|^{in\delta}$. Let $T$ be a finite set of places of $K$ that includes the archimedean places. We define the partial Bump–Friedberg \( L \)-function of \( \Pi \) as  
\begin{align*}
    L^T(s,\Pi,\mathrm{BF}) 
    &= \prod\limits_{\nu\notin T}L(s,{\Pi}_{\nu},\mathrm{BF}).
\end{align*}
Here, the local Bump–Friedberg \( L \)-function \( L(s,{\Pi}_{\nu},\mathrm{BF}) \) is the product \( L(s,{\Pi}_{\nu})L(2s,{\Pi}_{\nu},\wedge^2) \), where \( L(s,{\Pi}_{\nu}) \) and \( L(s,{\Pi}_{\nu},\wedge^2) \) are the standard and exterior square \( L \)-functions of \( \Pi_{\nu} \), respectively, both defined via the local Langlands correspondence. Using Theorem $\ref{main2}$, we derive the following corollary. 

\begin{coro}\label{coro2}{$\phantom{}$}
\vspace{0.2 cm}
\begin{enumerate}
\item For \( n \) even, the meromorphic function \( L^T(s,\Pi,\mathrm{BF}) \) is entire if \( \omega_{\Pi} \) is nontrivial on ideles of norm \( 1 \). Otherwise, it can have at most simple poles at \( s = -\frac{i\delta}{2} \) and \( s = \frac{1 - i\delta}{2} \).
 \item For $n$ odd, $L^T(s,\Pi,\mathrm{BF})$ is entire.
\end{enumerate}
\end{coro}

A similar result for the partial exterior square \( L \)-functions appears in the unpublished preprint of Belt \cite{BELT2011}, where he includes all ramified places in \( T \). His proof relies on the non-vanishing of the Jacquet–Shalika integrals introduced in \cite{JS1990}. 

We briefly outline the content of each section in this paper. In Section \ref{s2}, we establish the basic notation and preliminaries essential for the subsequent discussions. Section \ref{s3} introduces the theory of global and local Flicker integrals, culminating in the proof of Theorem $\ref{main1}$ and Corollary $\ref{coro1}$. Finally, in Section \ref{s4}, we review the theory of global and local Bump-Friedberg integrals, leading to the proofs of Theorem $\ref{main2}$ and Corollary $\ref{coro2}$.

\section{Preliminaries}\label{s2}

\subsection{Basic Notation}

Let $F$ be a local field of characteristic zero and $E$ be either a quadratic extension of $F$ (the inert case) or $F\times F$ (the split case). We write $|\cdot|_F$ and $|\cdot|_E$ for the normalized absolute values of $F$ and $E$ respectively and let $|\cdot|$ denote $|\cdot|^{1/2}_\mathbb{C}$. Thus, in the split case we have $|(\lambda, \mu)|_E=$ $|\lambda|_F|\mu|_F$ for every $(\lambda, \mu) \in E$ and in both cases we have $|x|_E=|x|_F^2$ for every $x \in F$. In the non-archimedean case, let $\mathcal{O}_F$ and $\mathcal{O}_E$ be the rings of integers of $F$ and $E$, respectively, with $q_F$ and $q_E$ denoting the cardinalities of their residue fields. 
Let $n\geq 1$ be an integer. Let $G_n$ denote the reductive group $\GL_n$ with $Z_n$, $B_n$, and $N_n$ being the subgroups of scalar, upper triangular, and unipotent upper triangular matrices in $G_n$ respectively.  Let \( P_n \) be the mirabolic subgroup of \( G_n \), consisting of matrices that stabilize the row vector \( e_n = (0, \dots, 0, 1) \) under the right multiplication action. Let $\mathbb{S}^1$ denote the multiplicative group of all complex numbers with absolute value $1$. Let $^{\iota}g$ denote the inverse transpose of an element $g$ in $G_n$ and $w_n$ denote the $n\times n$ matrix $\left( \begin{array}{ccc} &&1 \\ & \iddots & \\ 1&& \end{array} \right)$. 

 For $n=2m$, we define the embedding $J: G_m \times G_m \to G_n$ by
\[
J(g,g^{\prime})_{k,\ell} \coloneqq \begin{dcases*}
g_{i,j} &  if $k=2i-1$ and $\ell=2j-1$, \\
g^{\prime}_{i,j} &  if $k=2i$ and $l=2j$, \\
0 &  otherwise.
\end{dcases*}
\] 
Similarly, for $n = 2m + 1$, we define the embedding $J: G_{m+1} \times G_m \to G_n$ by
\[
J(g,g^{\prime})_{k,\ell} \coloneqq \begin{dcases*}
g_{i,j} & if $k=2i-1$ and $\ell=2j-1$, \\
g^{\prime}_{i,j} & if $k=2i$ and $\ell=2j$,\>\text{\>and} \\
0 &  otherwise.
\end{dcases*}
\]
Fix non-trivial additive characters $\psi^{\prime}: F \rightarrow \mathbb{S}^1$ and $\psi: E \rightarrow \mathbb{S}^1$, and assume that $\psi$ is trivial on $F$. We define generic characters $\psi_n: N_n(E) \rightarrow \mathbb{S}^1, \psi_n^{\prime}: N_n(F) \rightarrow \mathbb{S}^1$ by
\[
\psi_n^{\prime}(u)=\psi^{\prime}\left((-1)^n \sum_{i=1}^{n-1} u_{i, i+1}\right) \text { and } \psi_n(u)=\psi\left((-1)^n \sum_{i=1}^{n-1} u_{i, i+1}\right) .
\]
In the split case, we set $\tau$ to be the unique element $(\beta,-\beta)\in F^{\times}\times F^{\times}$ such that $$\psi(x,y)=\psi^{\prime}(\beta x)\psi^{\prime}(-\beta y).$$ In the inert case, we set $\tau$ to be the unique element in $E$ such that $\psi(z)=\psi^{\prime}(\operatorname{Tr}_{E/F}(\tau z))$ for every $z\in E$, where $\operatorname{Tr}_{E/F}$ stands for the trace of the extension $E/F$. 
Let $\mathcal{S}\left(F^n\right)$ be the space of Schwartz functions on $F^n$. We denote by $\Phi \mapsto \widehat{\Phi}$ the Fourier transform on $F^n$ defined as follows : for every $\Phi \in \mathcal{S}\left(F^n\right)$ we have
$$
\widehat{\Phi}\left(x_1, \ldots, x_n\right)=\int_{F^n} \Phi\left(y_1, \ldots, y_n\right) \psi^{\prime}\left(x_1 y_1+\ldots+x_n y_n\right) d y_1 \ldots d y_n  
$$
for all $\left(x_1, \ldots, x_n\right) \in F^n$, where the measure of integration is chosen so that $\widehat{\widehat{\Phi}}(v)=\Phi(-v)$.

By a {\em representation} of $G_n(F)$, we will always mean a smooth representation of finite length with complex coefficients. Here {\em smooth} has the usual meaning in the non-archimedean case (i.e. every vector has an open stabilizer) whereas in the archimedean case it means a smooth admissible Fr\'echet representation of moderate growth in the sense of Casselman-Wallach. We let $\Irr(G_n(F))$ and $\Pi_2(G_n(F))$ be the sets of isomorphism classes of all irreducible representations and irreducible square-integrable representations of $G_n(F)$ respectively.

\subsection{Essentially square-integrable representations and parabolic induction}

Let \(P\) be a standard parabolic subgroup of \(G_n\) and \(MU\) be its Levi decomposition. Then, \(M\) can be expressed as $M = G_{n_1} \times \ldots \times G_{n_k}$ for some integers \(n_1, \ldots, n_k\) such that \(n_1 + \ldots + n_k = n\). For each \(i\) with \(1 \leq i \leq k\), let \(\tau_i \in \operatorname{Irr}(G_{n_i}(F))\), so that the representation $\sigma = \tau_1 \otimes \ldots \otimes \tau_k$
is an irreducible representation of \(M(F)\). We denote the normalized induced representation by 
\[
i_{P(F)}^{G_n(F)}(\sigma)=\tau_1 \times \ldots \times \tau_k.
\]
A representation $\pi\in \Irr(G_n(F))$ is {\em generic} if it admits a nonzero Whittaker functional with respect to any (or equivalently one) generic character of $N_n(F)$. We will denote by $\Irr_{gen}(G_n(F))$ the subset of generic representations in $\Irr(G_n(F))$. By \cite[Theorem 9.7]{Zel1980} and \cite[Theorem 6.2f]{Vog1978}, every $\pi\in \Irr_{gen}(G_n(F))$ is isomorphic to a representation of the form $\tau_1\times\ldots \tau_k$ where for each $1\leqslant i\leqslant k$, $\tau_i$ is an essentially square-integrable (i.e. an unramified twist of a square-integrable) representation of some $G_{n_i}(F)$. If $\pi$ is an irreducible generic representation of $G_n(F)$, we write $\mathcal{W}(\pi,\psi_n)$ for its Whittaker model (with respect to $\psi_n$).


We recall the local theory of essentially square-integrable representations in the archimedean case. Essentially square-integrable representations of $G_n(\C)$ exist only for $n = 1$. An essentially square-integrable representation of $G_1(\C)$ must be a character of the form $\pi(x)=e^{i\kappa \operatorname{arg}(x)} |x|^t_{\C}$ for some $\kappa \in \mathbb{Z}$ and $t \in \C$, where $e^{i\operatorname{arg}(x)} \coloneqq x/|x|$. Essentially square-integrable representations of $G_n(\R)$ exist only for $n\in\{ 1, 2\}$. An essentially square-integrable representation of $G_1(\mathbb{R})=\mathbb{R}^{\times}$ is a character of the form $\pi(x)=\operatorname{sgn}(x)^{\kappa}|x|^t_{\mathbb{R}}$ for some $\kappa \in \{0,1 \}$ and $t \in \C$, where $\operatorname{sgn}(x) \coloneqq x/|x|_{\mathbb{R}}$. 
We identify \( G_1(\mathbb{C}) \) as a subgroup of \( G_2(\mathbb{R}) \) via the mapping \( a + ib \mapsto \begin{psmallmatrix} a & b \\ -b & a \end{psmallmatrix} \). For \( \kappa \neq 0 \), the essential discrete series representation of weight \( |\kappa|_{\mathbb{R}} + 1 \), defined by the induced representation
\[
D_{|\kappa| + 1} \otimes \left|\det\right|^t_{\mathbb{R}} \coloneqq i_{G_1(\mathbb{C})}^{G_2(\mathbb{R})} e^{i\kappa \operatorname{arg}} |\cdot|^t_{\mathbb{C}} \cong i_{G_1(\mathbb{C})}^{G_2(\mathbb{R})} e^{-i\kappa \operatorname{arg}} |\cdot|^t_{\mathbb{C}},
\]
is essentially square-integrable.
Every essentially square-integrable representation of $G_2(\R)$ is of the form $\pi=D_{\kappa}\otimes \left|\det\right|^t_{\R}$ for some integer $\kappa \geq 2$ and $t \in \C$.

\subsection{Some global notation}

Let $L/K$ be a quadratic extension of number fields. For every place $\nu$ of $K$, we denote by $K_\nu$ the corresponding completion of $K$ and set $L_\nu=K_\nu\otimes_K L$. If $\nu$ is non-archimedean, we let $\mathcal{O}_{K_\nu}$ and $\mathcal{O}_{L_\nu}$ be the rings of integers of $K_\nu$ and $L_\nu$ respectively. Let $\mathbb{A}_K=\prod'_{\nu} K_\nu$ and $\mathbb{A}_{L}=\mathbb{A}_K\otimes_K L=\prod_\nu' L_\nu$ be the ad\`ele rings of $K$ and $L$ respectively and $| \cdot|_{\mathbb{A}_K}$ be the normalized absolute value on $\mathbb{A}_K$. Define $\mathbb{I}^1_K$ as the subgroup of the idele group $\mathbb{A}^\times_K$ given by $\mathbb{I}^1_K = \{ x \in \mathbb{A}^\times_K : |x|_{\mathbb{A}_K} = 1 \}$. For the sake of notational brevity, we write $[G] \coloneqq Z(\A_K) G(K) \backslash G(\A_K)$ for any reductive group $G$ with center $Z$.
Let \(\Psi'\) and \(\Psi\) be nontrivial additive characters of \(K \backslash \mathbb{A}_K\) and \(L \backslash \mathbb{A}_L\), respectively, with \(\Psi\) being trivial on \(K \backslash \mathbb{A}_K\). For every place \(\nu\) of \(K\), let \(\Psi'_{\nu}\) and \(\Psi_{\nu}\) be the local components of \(\Psi'\) and \(\Psi\) at \(\nu\), respectively. To each, we associate a generic character \(\Psi_{n,\nu}: N_n(L_\nu) \to \mathbb{S}^1\) and \(\Psi'_{n,\nu}: N_n(K_\nu) \to \mathbb{S}^1\), as before. Then, \(\Psi_n = \prod_\nu \Psi_{n,\nu}\) defines a character of \(N_n(\mathbb{A}_L)\), which is trivial on both \(N_n(L)\) and \(N_n(\mathbb{A}_K)\), and we define \(\Psi'_n = \prod_\nu \Psi'_{n,\nu}\).
 Let \( \mathcal{S}(\mathbb{A}^n_K) \) denote the Schwartz-Bruhat space on \( \mathbb{A}^n_K \). The Fourier transform on \( \mathbb{A}^n_K \), denoted by \( \Phi \mapsto \widehat{\Phi} \), is defined as follows: for every \( \Phi \in \mathcal{S}(\mathbb{A}^n_K) \), we have
\[
\widehat{\Phi}(x_1, \ldots, x_n) = \int_{\mathbb{A}^n_K} \Phi(y_1, \ldots, y_n) \Psi'(x_1 y_1 + \cdots + x_n y_n) \, d y_1 \cdots d y_n,
\]
for all \( (x_1, \ldots, x_n) \in \mathbb{A}^n_K \), where the measure of integration is chosen so that $\widehat{\widehat{\Phi}}(v)=\Phi(-v)$. 
Given a Schwartz-Bruhat function $\Phi \in \mathcal{S}(\mathbb{A}^n_K)$, we form the $\Theta$-series
\[
\Theta_{\Phi}(a,g) \coloneqq \sum_{\xi \in K^n} \Phi(a \xi g) \quad \text{for $a \in \A_K^{\times}$ and $g \in G_n(\A_K)$}.
 \]
Associated with this $\Theta$-series is an Eisenstein series, which is essentially the Mellin transform of $\Theta$. To be precise, for a unitary Hecke character 
$\eta : K^{\times} \backslash \A_K^{\times} \rightarrow \C^{\times}$, we set 
\[E(g,s;\Phi,\eta) \coloneqq \left|\det g\right|^s_{\A_K} \int_{K^{\times} \backslash \A_K^{\times}} \Theta'_{\Phi}(a,g) \eta(a)|a|^{ns}_{\A_K} \, d^{\times}a,
\]
where $\Theta'_{\Phi}(a,g) \coloneqq \Theta_{\Phi}(a,g) - \Phi(0)$. 

In \cite{JS1981}, Jacquet and Shalika established the analytic properties of the Eisenstein series.
\begin{thm}
  The Eisenstein series \(E(g,s;\Phi,\eta)\) has a meromorphic continuation to all of \(\mathbb{C}\). It is entire unless \(\eta\) is trivial on \(\mathbb{I}^1_K\) of the form \(\eta(a) = |a|^{in\delta}\) with \(\delta \in \mathbb{R}\), in which case it has at most simple poles at \(s = -i\delta\) and \(s = 1 - i\delta\). As a function of $g$ it is smooth of moderate growth and as a function of $s$ it is bounded in vertical strips (away from possible poles), uniformly for $g$ in compact sets. Moreover, it satisfies the functional equation
  \[
  E(g,s;\Phi,\eta) = E(^{\iota}g,1-s;\hat{\Phi},\eta^{-1}).
  \]
\end{thm}

\section{Flicker Integrals}\label{s3}

Let \((\Pi, V_{\Pi})\) be a unitary cuspidal automorphic representation of \( G_n(\mathbb{A}_L) \) with central character \( \omega_{\Pi} \). Then \( \Pi \) is isomorphic to a restricted tensor product, \( \Pi \cong \bigotimes'_\nu \Pi_\nu \), taken over the places \( \nu \) of \( K \). Here, each \( \Pi_\nu \) belongs to \( \operatorname{Irr}_{gen}(G_n(L_\nu)) \) and is unramified for all but finitely many places \( \nu \). For $\Phi\in\mathcal{S}(\mathbb{A}^n_K)$ and $\varphi\in V_{\Pi}$, Flicker \cite{Fli1988} defined the global integral
$$ I(s,\Phi,\varphi)=\int\limits_{[G_n]}E(g,s;\Phi,\omega_{\Pi}\vert_{\mathbb{A}^{\times}_K})\>\varphi(g)dg,$$

If $\omega_{\Pi}\vert_{\mathbb{A}^{\times}_K}$ is trivial on $\mathbb{I}^{1}_K$, let $\delta$ be the real number such that $\omega_{\Pi}\vert_{\mathbb{A}^{\times}_K}(.) = \left|.\right|^{in\delta}$.
  For ease of reference, we collect certain properties of these integrals (\cite{Fli1988}, \cite{KA2004}).
\begin{prop}\label{1.1}
    The integral $I(s,\Phi,\varphi)$ is convergent whenever the Eisenstein series is holomorphic at $s$. It has a meromorphic continuation to the entire complex plane and satisfies the functional equation
    \[I(s,\Phi,\varphi)=I(1-s,\hat{\Phi},\tilde{\varphi}),\]
    where $\tilde{\varphi}(g)=\varphi(w_n {^{\iota}g}).$
\end{prop}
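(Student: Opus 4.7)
The plan is to follow the standard Rankin--Selberg-type unfolding argument, as in Flicker \cite{Fli1988} and Kable \cite{KA2004}. First I would restrict to the region $\operatorname{Re}(s)\gg 0$ where $E(g,s;\Phi,\omega_{\Pi}|_{\A_K^{\times}})$ converges absolutely and where the outer integral over $[G_n]$ converges by cuspidality of $\varphi$. Substituting the Mellin-integral definition of $E$ and interchanging integrations, the $\Theta'$-series contributes $\sum_{\xi\in K^n\setminus\{0\}}\Phi(a\xi g)$. Using the identification $K^n\setminus\{0\}\cong P_n(K)\backslash G_n(K)$ via $\gamma\mapsto e_n\gamma$, and combining the $K^{\times}\backslash\A_K^{\times}$ integral with the central character $\omega_{\Pi}$ to absorb the quotient by $Z_n(\A_K)$, one obtains an integral of $\varphi(g)\Phi(e_ng)|\det g|_{\A_K}^{s}$ over $P_n(K)\backslash G_n(\A_K)$.

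Applying the standard Whittaker--Fourier expansion for a cusp form,
$$\varphi(g)=\sum_{\gamma\in N_n(K)\backslash P_n(K)}W_\varphi(\gamma g),\qquad W_\varphi(g)=\int_{[N_n]}\varphi(ug)\Psi_n^{-1}(u)\,du,$$
then collapses the remaining rational sum against the integration and yields the Eulerian expression
$$I(s,\Phi,\varphi)=\int_{N_n(\A_K)\backslash G_n(\A_K)}W_\varphi(g)\,\Phi(e_ng)\,|\det g|_{\A_K}^{s}\,dg.$$
Absolute convergence in $\operatorname{Re}(s)\gg 0$ follows from standard gauge estimates for adelic Whittaker functions together with the Schwartz--Bruhat decay of $\Phi$, and these same bounds justify the interchanges made above. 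Meromorphic continuation to all of $\C$ transfers directly from the Eisenstein series recalled in the theorem above: cuspidality of $\varphi$ forces the residues at $s=0,1$ to vanish, since they reduce to $\Phi(0)\int_{[G_n]}\varphi\,dg$ and $\widehat{\Phi}(0)\int_{[G_n]}\varphi(g)|\det g|_{\A_K}\,dg$ respectively, both of which vanish.

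For the functional equation, I would apply $E(g,s;\Phi,\eta)=E({}^{\iota}g,1-s;\widehat{\Phi},\eta^{-1})$ to the global integral and then perform the change of variable $g\mapsto w_n{}^{\iota}g^{-1}$ on $[G_n]$. This change preserves the invariant measure, converts $\omega_{\Pi}^{-1}$ into the central character of the contragredient $\widetilde{\Pi}$, and identifies the resulting function of $g$ with the automorphic realization $\widetilde{\varphi}$ of $\widetilde{\Pi}$, producing precisely $I(1-s,\widehat{\Phi},\widetilde{\varphi})$. The main technical obstacle is the rigorous justification of the unfolding: one must interchange a Mellin integral over $a$, an adelic integral over $g$, and a rational sum over $K^n\setminus\{0\}$, and apply term-by-term the Whittaker expansion of $\varphi$. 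This requires uniform gauge estimates for both cusp forms and for the Eisenstein series on compact subsets of $\operatorname{Re}(s)\gg 0$; once such estimates are in place, the analytic assertions of the proposition follow cleanly by continuation.
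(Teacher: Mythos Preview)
The paper does not supply its own proof of this proposition; it is quoted from \cite{Fli1988} and \cite{KA2004}. Your sketch of convergence, meromorphic continuation, and the functional equation via the analytic properties of the Eisenstein series is essentially the standard argument from those references, but two points need correction.

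First, the unfolding to the adelic Whittaker integral is not part of Proposition~\ref{1.1}; that is the content of Proposition~\ref{1.3}. For Proposition~\ref{1.1} one argues more directly: at any $s$ where $E(\cdot,s;\Phi,\eta)$ is holomorphic, the Eisenstein series is of moderate growth on $[G_n]$, and the cusp form $\varphi$ (restricted from $G_n(\mathbb{A}_L)$ to $G_n(\mathbb{A}_K)$) is rapidly decreasing, so the integral over $[G_n]$ converges without any unfolding.

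Second, and more seriously, your claim that ``cuspidality of $\varphi$ forces the residues at $s=0,1$ to vanish'' is false. The residue at $s=1$ is, up to a nonzero constant, $\widehat{\Phi}(0)\int_{[G_n]}\varphi(g)\,dg$; but $\varphi$ is a cusp form on $G_n(\mathbb{A}_L)$ while the integral is taken over $Z_n(\mathbb{A}_K)G_n(K)\backslash G_n(\mathbb{A}_K)$. This is the Flicker--Rallis period, and its vanishing is \emph{not} a consequence of cuspidality along unipotent radicals: it detects precisely whether $\Pi$ is $G_n(\mathbb{A}_K)$-distinguished. Proposition~\ref{1.2} explicitly allows $I(s,\Phi,\varphi)$ to have simple poles at $s=0$ and $s=1$. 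Since Proposition~\ref{1.1} only asserts meromorphic (not holomorphic) continuation, this error does not undermine the statement being proved, but the incorrect residue computation should be removed.
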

In \cite{Fli1988}, Flicker showed that the poles of $ I(s,\Phi,\varphi)$ are closely related to those of the Eisenstein series.
\begin{prop}\label{1.2}
    The integral \( I(s,\Phi,\varphi) \) is entire if \( \omega_{\Pi} \) is nontrivial on \( \mathbb{I}^{1}_K \). Otherwise, it has at most simple poles at \( s = -i\delta \) and \( s = 1 - i\delta \).
\end{prop}

We recall the factorization of the integral $I(s,\Phi,\varphi)$ (as in Sections $2$ and $3$ of \cite{Fli1988}).

\begin{prop}\label{1.3}
    If $\varphi\in V_{\Pi}$ is a cusp form, let
    $$ W_{\varphi}(g)=\int\limits_{N_n(L)\backslash N_n(\mathbb{A}_{L})}\varphi(ng)\overline{\Psi}(n)\>dn $$
    be the associated Whittaker function. For $\Phi\in\mathcal{S}(\mathbb{A}^n_{K})$, the integral 
    $$ Z(s,W_\varphi,\Phi)=\int\limits_{N_n(\mathbb{A}_{K})\backslash G_n(\mathbb{A}_{K})}W_{\varphi}(g)\Phi(e_n g)\left|\operatorname{det}(g)\right|_{\mathbb{A}_K}^s\>dg$$
    converges absolutely and uniformly on compact sets when $\operatorname{Re}(s)$ is sufficiently large. When this is the case, we have
    \[I(s,\Phi,\varphi)=Z(s,W_\varphi,\Phi).\]
\end{prop}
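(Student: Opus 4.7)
The plan is to perform the classical unfolding of the global Flicker integral, following the strategy of Jacquet–Piatetski-Shapiro–Shalika and Flicker. For $\operatorname{Re}(s) \gg 0$, the rapid decay of the cusp form $\varphi$ and the polynomial growth of the Eisenstein kernel ensure that all subsequent integrals and sums converge absolutely, justifying each exchange of order below.

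First, I substitute the integral formula defining $E(g, s; \Phi, \omega_\Pi|_{\A_K^\times})$, expand $\Theta'_\Phi(a, g) = \sum_{\xi \in K^n \setminus \{0\}} \Phi(a \xi g)$, and use the $K^\times$-action on $\xi$ to merge the sum over scalar multiples with the idele-class integration, producing an inner integral over the full $\A_K^\times$ indexed by projective representatives $[\xi] \in \mathbb{P}^{n-1}(K)$. Combined with the $G_n(K)$-orbit structure on $K^n \setminus \{0\}$ (transitive with stabilizer $P_n(K)$ at $e_n$), and the $G_n(K)$-invariance of $\varphi$ and $|\det(\cdot)|_{\A_K}$, the standard unfolding identity reduces the outer integration to $Z(\A_K) P_n(K) \backslash G_n(\A_K)$, leaving the expression $|\det g|^s \varphi(g) \int_{\A_K^\times} \Phi(a e_n g) \omega_\Pi(a) |a|^{ns}_{\A_K} d^\times a$ to be integrated over this quotient.

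Second, I absorb the $\A_K^\times$-integration into the $g$-integration via the fibration $Z(\A_K) \to P_n(K) \backslash G_n(\A_K) \to Z(\A_K) P_n(K) \backslash G_n(\A_K)$, which is valid because $Z \cap P_n = \{1\}$. Setting $h = ag$ (with $a$ viewed as the scalar matrix $aI$) and using that $\varphi$ has central character $\omega_\Pi$, the integrand collapses to $\Phi(e_n h) |\det h|^s_{\A_K} \varphi(h)$ on $P_n(K) \backslash G_n(\A_K)$, yielding $I(s, \Phi, \varphi) = \int_{P_n(K) \backslash G_n(\A_K)} \Phi(e_n h) |\det h|^s_{\A_K} \varphi(h)\, dh$. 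Finally I substitute the Fourier–Whittaker expansion $\varphi(h) = \sum_{\gamma \in N_n(L) \backslash P_n(L)} W_\varphi(\gamma h)$ of the cusp form, swap sum and integral, and apply a second unfolding that merges the $L$-rational coset sum with the $P_n(K)$-quotient into an adelic integration over $N_n(\A_K) \backslash G_n(\A_K)$. The key input is that $\Psi_n$ is trivial on $N_n(\A_K)$ (because $\Psi$ is trivial on $\A_K$), so the Whittaker equivariance poses no obstruction when passing to $N_n(\A_K)$-cosets; this yields the desired equality $I(s, \Phi, \varphi) = Z(s, W_\varphi, \Phi)$.

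The main obstacle is the second unfolding: the interplay between the $L$-rational cosets $N_n(L) \backslash P_n(L)$ and the adelic $N_n(\A_K)$-integration is subtle because $G_n(\A_K) \hookrightarrow G_n(\A_L)$ captures only a slice of the $L$-adelic Whittaker expansion. Verifying the validity of this unfolding, together with the absolute convergence of the resulting local Whittaker integral $Z(s, W_\varphi, \Phi)$ for $\operatorname{Re}(s) \gg 0$, is the technical heart of the argument and is precisely what distinguishes Flicker's Asai-type setting from the classical Rankin–Selberg proof.
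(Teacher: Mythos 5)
Your first unfolding is correct and well-justified: merging the $K^\times$-sum inside $\Theta'_\Phi$ with the $K^\times\backslash\A_K^\times$ integration gives the sum over $\mathbb{P}^{n-1}(K) \cong Z(K)P_n(K)\backslash G_n(K)$ and the full-idele integral, and absorbing $Z(\A_K)$ via the fibration (using $Z\cap P_n=\{1\}$ and the central-character identity $\varphi(ag)=\omega_\Pi(a)\varphi(g)$) correctly produces $\int_{P_n(K)\backslash G_n(\A_K)}\Phi(e_ng)|\det g|^s_{\A_K}\varphi(g)\,dg$.

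The gap is in the second unfolding, and it is not a minor bookkeeping issue. You substitute the Shalika expansion $\varphi(h)=\sum_{\gamma\in N_n(L)\backslash P_n(L)}W_\varphi(\gamma h)$ and then assert that the sum and the $P_n(K)$-quotient "merge" into $N_n(\A_K)\backslash G_n(\A_K)$ because $\Psi_n$ is trivial on $N_n(\A_K)$. But right multiplication by $P_n(K)$ does \emph{not} act transitively on $N_n(L)\backslash P_n(L)$: the orbit space is the double-coset space $N_n(L)\backslash P_n(L)/P_n(K)$, which is infinite. Unfolding gives one term per orbit, $\int_{\mathrm{Stab}_{P_n(K)}(\delta)\backslash G_n(\A_K)}W_\varphi(\delta h)\Phi(e_n h)|\det h|^s\,dh$, and only the identity orbit (with stabilizer $N_n(K)$) produces $Z(s,W_\varphi,\Phi)$ after passing from $N_n(K)$ to $N_n(\A_K)$. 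What must then be shown — and this is the actual content of Flicker's argument — is that every other orbit contributes $0$. For a representative $\delta\notin N_n(L)P_n(K)$, one folds in an abelian unipotent and finds an inner integral of the form $\int_{K\backslash\A_K}\Psi(\mathrm{Tr}_{L/K}(\tau\cdot\text{something involving }\delta)\,x)\,dx$, which vanishes precisely because the induced character of $\A_K$ is \emph{non}-trivial for those $\delta$. So the operative fact is not merely that $\Psi_n|_{N_n(\A_K)}$ is trivial (which only explains why the identity orbit survives), but that the $\delta$-conjugated characters of $N_n(\A_K)$ are non-trivial off the identity orbit. You flag this as "the technical heart" but give no indication of how the vanishing is established, and your phrasing ("the Whittaker equivariance poses no obstruction") suggests the wrong reason for the collapse. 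Flicker and Jacquet–Piatetski-Shapiro–Shalika handle this by an inductive, column-by-column Fourier expansion (using $P_k=G_{k-1}\ltimes U_k$), which distributes the orbit-vanishing over $n-1$ abelian steps and also makes the absolute-convergence claim in your opening paragraph straightforward at each stage; the all-at-once Shalika-expansion route you propose requires the full double-coset analysis in one step, and that analysis is precisely what is missing from your write-up.
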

The global integrals decompose into products of local integrals for decomposable vectors. Let  
$W_{\varphi} = \prod_{\nu} W_\nu$,  
where the product runs over all places $\nu$ of $K$, and each $W_\nu$ belongs to $\mathcal{W}(\Pi_\nu, \Psi_{n,\nu})$. For almost all unramified places $\nu$, assume that $W_\nu$ is the normalized spherical Whittaker function, defined as the unique Whittaker function that is invariant under $G_n(\mathcal{O}_{L_\nu})$ and satisfies $W_\nu(1) = 1$.

 Similarly, let \( \Phi = \prod_{\nu} \Phi_\nu \), where each \( \Phi_\nu \) is a Schwartz function in \( \mathcal{S}(K_\nu^n) \), and for almost all unramified places \( \nu \), \( \Phi_\nu \) is the characteristic function of \( \mathcal{O}_{K_{\nu}}^n \). When $\operatorname{Re}(s)$ is sufficiently large,
\[Z(s,W_\varphi,\Phi)=\prod\limits_{\nu}Z(s,W_\nu,\Phi_\nu),\]
where
\[Z(s,W_\nu,\Phi_\nu)=\int\limits_{N_n(K_{\nu})\backslash G_n(K_\nu)}W_{\nu}(g)\Phi_{\nu}(e_n g)|\operatorname{det}(g)|^s_{K_\nu}\>dg.\]
When a place $\nu$ of $K$ splits in $L$, this integral coincides with the Rankin-Selberg integral (see \cite{JPSS1983}). For $\Pi_\nu=\pi_1\otimes\pi_2\in\operatorname{Irr}_{\operatorname{gen}}\left(G_n(K_\nu\times K_\nu)\right)$ and $\tau=(1,-1)$, every $W_\nu\in\mathcal{W}\left(\Pi_\nu, \Psi_{n,\nu}\right)$ can be written as $W_1\otimes W_2$ for some $W_1\in\mathcal{W}\left(\pi_1, \Psi^{\prime}_{n,\nu}\right)$ and $W_2\in\mathcal{W}\left(\pi_2, \Psi^{\prime\,-1}_{n,\nu}
\right).$ Then for $\Phi_\nu\in\mathcal{S}(K^n_\nu)$, the integral $Z(s,W_\nu,\Phi_\nu)$ equals
\[Z(s,W_\nu,\Phi_\nu)=\int_{N_n(K_\nu) \backslash G_n(K_\nu)} W_1(g)W_2(g)\Phi_\nu(e_n g)|\operatorname{det}(g)|_{K_\nu}^s \>d g.\]

The integrals $Z(s, W_\nu,\Phi_\nu)$ converge absolutely when $\operatorname{Re}(s)$ is sufficiently large and admit a meromorphic continuation to the entire complex plane (see \cite{JPSS1983} for the split case and \cite{Fli1988}, \cite{BP2021} for the inert case).

Next, we present the unramified computation of these local integrals as discussed in the literature. Specifically, \cite{JPSS1983} addresses the split case, while \cite{Fli1988} covers the inert case. For a more recent exposition, see \cite{BP2021}.
\begin{prop}\label{3.5}
    Let \(\nu\) be a place of \(K\), and suppose that \(\Pi_{\nu}\) is an unramified, non-archimedean representation. Consider the normalized spherical Whittaker function \( W_\nu \in \mathcal{W}(\Pi_\nu, \Psi_{n,\nu}) \) and let \( \Phi_{\nu} \in \mathcal{S}(K^n_\nu) \) be the characteristic function of \( \mathcal{O}_{K_{\nu}}^n \). Then, we have:  

    \[
    Z(s, W_\nu, \Phi_\nu) =  L(s, \Pi_{\nu}, \mathrm{As}).
    \]
\end{prop} 

We are now ready to prove Theorem \ref{main1}.


\begin{flushleft}
    {\section*{Proof of Theorem \ref{main1}}}
\end{flushleft}

First, assume that \(E\) and \(F\) are non-archimedean. By Theorem 4.26 of \cite{MAT2009} and Proposition 9.4 of \cite{JPSS1983}, there exist \(W \in \mathcal{W}(\pi, \psi_{n})\) and \(\Phi \in \mathcal{S}(F^n)\) such that  
\[ 
Z(s,W,\Phi) = L_f(s, \pi, \mathrm{As}).
\]  
Here, 
$L_f(s,\pi,\mathrm{As})$ denotes the formal Asai $L$-function of $\pi$ when $E$ is inert over $F$ and the formal Rankin-Selberg $L$-function of $\pi$ when $E$ is split over $F$ (as defined in \cite{Jo2023}). As both of these formal $L$-functions are inverse of certain polynomials in $q_F^{-s}$ and $q_E^{-s}$, they are non-vanishing for all $s$.

Next, we assume that $E$ and $F$ are archimedean. First, we deal with the case when $E$ is split over $F$. If $\pi=\rho\otimes\sigma$ for irreducible generic representations $\rho=\rho_1\times\rho_2\cdots\times\rho_r$ and $\sigma=\sigma_1\times\sigma_2\cdots\times\sigma_l$ of $G_n(F)$ with $\rho_i$ and $\sigma_j$ essentially square-integrable, using Theorem $5.3$ and Proposition $5.5$ of \cite{HJ2024}, there exists a non-zero complex polynomial $p_{\rho\times\sigma,\mathrm{RS}}$ (dependent on $\pi$), $W\in\mathcal{W}(\pi,{\psi}_{n})$ and $\Phi\in\mathcal{S}(F^n)$ such that
\[{Z(s,W,\Phi)}=p_{\rho\otimes\sigma,\mathrm{RS}}(s) L(s,\rho\times\sigma).\]
We claim that the product $L(s,\rho\times\sigma)p_{\rho\otimes\sigma,\mathrm{RS}}(s)$ is non-vanishing for all $s$. Assume that $\rho\otimes\sigma$ is ramified otherwise $p_{\rho\otimes\sigma,\mathrm{RS}}\equiv 1$, and then our claim would hold.
By the multiplicativity of $L$-factors \cite{KN1994},
\[L(s,\rho\times\sigma) = \prod_{j=1}^{r} \prod_{i=1}^{l} L(s,\rho_j\times\sigma_i).\]
Therefore, by the proof of Proposition $5.5$ of \cite{HJ2024}, 
\[p_{\rho\otimes\sigma,\mathrm{RS}}(s)=\prod_{j=1}^{r} \prod_{i=1}^{l} p_{\rho_j\times\sigma_i,\mathrm{RS}}(s).\]
where $p_{\rho_j\times\sigma_i, \mathrm{RS}}$ is the polynomial as defined in Proposition $5.5$ of \cite{HJ2024}.
So we can restrict ourselves to the case where both $\rho$ and $\sigma$ are essentially square-integrable.

Suppose first that $F=\mathbb{C}$, so that if $\rho = e^{i\kappa \arg} |\cdot|_{\C}^{t}$, and $\sigma = e^{i\lambda \arg} |\cdot|_{\C}^{u}$, then \[L(s,\rho\times\sigma)p_{\rho\otimes\sigma,\mathrm{RS}}(s)=\zeta_{\mathbb{C}}\left(s+t+u+\frac{|\kappa|}{2}+\frac{|\lambda|}{2}\right).\]
Next, assume that $F = \R$. If $\rho = \operatorname{sgn}^{\kappa} |\cdot|_{\R}^{t}$ and $\sigma = \operatorname{sgn}^{\lambda} |\cdot|_{\R}^{u}$, then $$L(s,\rho\times\sigma)p_{\rho\otimes\sigma,\mathrm{RS}}(s)=\zeta_{\R}(s+t+\kappa+\lambda).$$ 
If $\rho= D_{\kappa} \otimes \left|\det\right|_{\R}^{t}$ and $\sigma = \operatorname{sgn}^{\lambda} |\cdot|_{\R}^{u}$, then
\[L(s,\rho\times\sigma)p_{\rho\otimes\sigma,\mathrm{RS}}(s)=\zeta_{\R}\left(s + t + u + \frac{\kappa - 1}{2} + \lambda\right)\zeta_{\R}\left(s + t + u + \frac{\kappa + 1}{2} + \lambda\right).\]
Finally, if $\rho = D_{\kappa} \otimes \left|\det\right|_{\R}^{t}$, and $\sigma = D_{\lambda} \otimes \left|\det\right|_{\R}^{u}$, then $L(s,\rho\times\sigma)p_{\rho\otimes\sigma,\mathrm{RS}}(s)$ equals
\[\zeta_{\R}\left(s + t + u + \frac{\kappa + \lambda}{2}+ 1\right)\zeta_{\R}\left(s + t + u + \frac{\kappa + \lambda}{2} - 1\right)\left[\zeta_{\R}\left(s + t + u + \frac{\kappa + \lambda}{2}\right)\right]^2.\]
In each of these cases, $L(s,\rho\times\sigma)p_{\rho\otimes\sigma,\mathrm{RS}}(s)$ does not vanish at any $s_0\in\mathbb{C}$.

Let us turn to the case when $E$ is inert over $F$. Then for $\pi=\pi_1\times\pi_2\cdots\times\pi_r$ with $\pi_i$ essentially square-integrable, using Theorem $6.6$ and Proposition $6.7$ of \cite{HJ2024}, there exists a non-zero complex polynomial $p_{\pi,\mathrm{As}}$ (dependent on $\pi$), $W\in\mathcal{W}(\pi,{\psi}_{n})$ and $\Phi\in\mathcal{S}(F^n)$ such that
\[{Z(s,W,\Phi)}=p_{\pi,\mathrm{As}}(s) L(s,\pi,\mathrm{As}).\]
We claim that the product $L(s,\pi,\mathrm{As})p_{\pi,\mathrm{As}}(s)$ is non-vanishing for all $s$. Assume that $\pi$ is ramified otherwise $p_{\pi,\mathrm{As}}\equiv 1$, and then our claim would hold.
We have that
\[L(s,\pi,\mathrm{As}) = \prod_{j = 1}^{r} L(s,\pi_j,\mathrm{As}) \prod_{1 \leq j < \ell \leq r} L(s, \pi_j \times \pi_{\ell})\]
via the multiplicativity of $L$-factors (Lemma $3.2.1$, \cite{BP2021}). Therefore, by the proof of Proposition $6.7$ of \cite{HJ2024}, 
\[p_{\pi,\mathrm{As}}(s)= \prod_{j = 1}^{r}p_{\pi_j,\mathrm{As}}(s) \prod_{1 \leq j < \ell \leq r} p_{\pi_j\times\pi_l,\mathrm{RS}}(s).\]
where $p_{\pi_j, \mathrm{As}}$ is the polynomial as defined in Proposition $6.7$ of \cite{HJ2024}. 
So we can restrict ourselves to the case where $\pi$ is essentially square-integrable.
If $\pi = e^{i\kappa \arg} |\cdot|_{\C}^{t}$, then
\[p_{\pi,\mathrm{As}}(s)L(s,\pi,\mathrm{As})=\zeta_{\R}(s+2t+|\kappa|),\]
which is non-vanishing at any $s_0\in\mathbb{C}$. This completes the proof.
\vspace{0.5 cm}
\section*{Proof of Corollary \ref{coro1}}

 We reiterate that, for notational convenience, we denote by \( L(s, \Pi_{\nu}, \mathrm{As}) \) the Rankin-Selberg \( L \)-function of \( \Pi_\nu \) in the case when \( \nu \) splits over \( L \). Fix an $s_0\in\mathbb{C}$. Let $S$ be a finite set of places of $K$ containing the archimedean places of $K$. Let $S_u$ be the set of places $\nu\notin S$ such that $\Pi_\nu$ is unramified. For each place \( \nu \in S_u \), choose the normalized spherical Whittaker function \( W_\nu \in \mathcal{W}(\Pi_\nu, \Psi_{n,\nu}) \) and set \( \Phi_{\nu} \in \mathcal{S}(K^n_\nu) \) to be the characteristic function of \( \mathcal{O}_{K_{\nu}}^n \).

Let $S_r$ be the set of places   $\nu\notin S$ such that $\Pi_\nu$ is ramified. Then for every place $\nu\in S_r$, we choose $W_\nu\in\mathcal{W}(\Pi_\nu,{\psi}_{n,\nu})$ and $\Phi_\nu\in\mathcal{S}(F^n_{\nu})$ such that 
$$\frac{Z(s_0,W_{\nu}, \Phi_{\nu})}{L(s_0,\Pi_\nu,\mathrm{As})}\neq 0.$$
This choice is justified by the fact that $L(s, \Pi_\nu, \mathrm{As})$ serves as the greatest common divisor of the family of local Flicker integrals when $\nu$ is inert in $L$ (\cite{MAT2009}), and as the greatest common divisor of the family of local Rankin-Selberg integrals when $\nu$ splits in $L$ (\cite{JPSS1983}).

Finally for $\nu\in S$, using Theorem \ref{main1}, we choose $\Phi_\nu$ and $W_{\nu}$ so that the local integrals $Z(s,W_\nu,\Phi_\nu)$ do not vanish at any complex number $s\in\mathbb{C}$. Choose $\varphi\in V_{\Pi}$ such that the associated global Whittaker function $W_{\varphi}$ is $\prod_{\nu} W_{\nu},$ and choose $\Phi\in\mathcal{S}(\mathbb{A}^n_K)$ to be $\Phi=\prod_{\nu}\Phi_\nu.$ Note that such a choice of $\varphi$ is possible due to the Fourier expansion of an automorphic form for $\operatorname{GL}_n$ \cite{PS1979,Sha1974}. Using Proposition \ref{3.5} and the Euler product decomposition, we obtain
\begin{equation}
    I(s,\Phi,\varphi)=L^{S}(s,\Pi,\mathrm{As})\prod\limits_{\nu\in S_r}\frac{Z(s,W_\nu,\Phi_\nu)}{L(s,\Pi_\nu,\mathrm{As})}\prod\limits_{\nu\in S}Z(s,W_\nu,\Phi_\nu).
\end{equation}
 Note that the finite product $\prod\limits_{\nu\in S}Z(s,W_\nu,\Phi_\nu)$ as a function of $s$ is nowhere vanishing by Theorem \ref{main1} and the finite product
 $$\prod\limits_{\nu\in S_r}\frac{Z(s,W_\nu,\Phi_\nu)}{L(s,\Pi_\nu,\mathrm{As})}$$
 is non-vanishing at $s_0\in\mathbb{C}$. We rewrite equation $3.5$ as
\[\frac{1}{\prod\limits_{\nu\in S_r}\frac{Z(s,W_\nu,\Phi_\nu)}{L(s,\Pi_\nu,\mathrm{As})}\prod\limits_{\nu\in S}Z(s,W_\nu,\Phi_\nu)}=\frac{L^{S}(s,\Pi,\mathrm{As})}{I(s,\Phi,\varphi)},\]
so that by our choice of ${\varphi}$ and $\Phi$, the quotient $\frac{L^{S}(s,\Pi,\mathrm{As})}{I(s,\Phi,\varphi)}$ is holomorphic at $s_0\in\mathbb{C}$. As the choice of $s_0$ was arbitrary, this quotient is, in fact, entire. By Proposition \ref{1.2}, if \( \omega_{\Pi} \) is nontrivial on \( \mathbb{I}^{1}_K \), then \( I(s,\Phi,\varphi) \) is entire for all choices of \( \Phi \) and \( \varphi \), implying that \( L^{S}(s,\Pi,\mathrm{As}) \) is also entire. Otherwise, \( I(s,\Phi,\varphi) \) can have at most simple poles at \( s = -i\delta \) and \( s = 1 - i\delta \), and consequently, so can \( L^{S}(s,\Pi,\mathrm{As}) \).

\section{Bump-Friedberg Integrals}\label{s4}

\subsection{The Bump-Friedberg integrals}\label{1.3.2}

Let \((\Pi, V_{\Pi})\) be a unitary cuspidal automorphic representation of \( G_n(\mathbb{A}_K) \) with central character \( \omega_{\Pi} \). Then \( \Pi \) is isomorphic to a restricted tensor product, \( \Pi \cong \bigotimes'_\nu \Pi_\nu \), taken over the places \( \nu \) of \( K \). Here, each \( \Pi_\nu \) belongs to \( \operatorname{Irr}_{gen}(G_n(K_\nu)) \) and is unramified for all but finitely many places \( \nu \). If $\omega_{\Pi}$ is trivial on $\mathbb{I}^{1}_K$, let $\delta$ be the real number such that $\omega_{\Pi}(.) = \left|.\right|^{in\delta}$. We introduce the Bump-Friedberg integrals as defined in \cite{BF1990}.

\begin{flushleft}
    {\textbf{Even Case $(n=2m)$}}
\end{flushleft}

For $\Phi\in\mathcal{S}(\mathbb{A}^m_K)$ and $\varphi\in V_{\Pi}$, consider the global integral
\[I(s_1,s_2,\Phi,\varphi)=\int\limits_{[G_m\times \>G_m]}E(g',s_2;\Phi,\omega_{\Pi})\>\varphi(J(g,g'))\>\left|{\frac{\operatorname{det}(g)}{\operatorname{det}(g')}}\right|_{\mathbb{A}_K}^{s_1-1/2}dg\>dg'.
 \]
We outline key properties of these integrals, drawing from \cite{BF1990}.
 \begin{prop}\label{1.4}
    The integral $I(s_1,s_2,\Phi,\varphi)$ is convergent whenever the Eisenstein series is holomorphic at $s_2$. It represents a meromorphic function in the variables $s_1$ and $s_2$.
\end{prop}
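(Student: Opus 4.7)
The plan is to establish absolute convergence of $I(s_1,s_2,\Phi,\varphi)$ for every $s_1\in\mathbb{C}$ and every $s_2$ at which the Eisenstein series is holomorphic, exploiting the familiar balance between rapid decrease of cusp forms and moderate growth of Eisenstein series, and to then deduce joint meromorphic continuation in $(s_1,s_2)$ by combining this estimate with the Jacquet--Shalika theorem recalled above.

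For convergence, fix $s_2$ where $E(\cdot,s_2;\Phi,\omega_\Pi)$ is holomorphic. On this locus, $g'\mapsto E(g',s_2;\Phi,\omega_\Pi)$ is an automorphic form on $G_m(\mathbb{A}_K)$ of moderate growth on Siegel sets, a standard consequence of absolute convergence for $\operatorname{Re}(s_2)>1$ together with the functional equation. The embedding $J$ is conjugate inside $G_{2m}(K)$ by a rational permutation matrix to the standard block-diagonal inclusion of the Levi $G_m\times G_m\hookrightarrow G_{2m}$; using left $G_{2m}(K)$-invariance of $\varphi$, the rapid decrease of cusp forms on Siegel sets of $G_{2m}(\mathbb{A}_K)$, due to Gelfand--Piatetski-Shapiro, transfers to rapid decrease of $(g,g')\mapsto\varphi(J(g,g'))$ on Siegel sets of $(G_m\times G_m)(\mathbb{A}_K)$. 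The remaining factor $|\det(g)/\det(g')|^{\operatorname{Re}(s_1)-1/2}_{\mathbb{A}_K}$ is of polynomial growth in the height function, and rapid decrease dominates its product with the moderate growth of $E$. Hence the integrand is bounded, uniformly for $(s_1,s_2)$ in any compact subset of the region of holomorphy of $E$, by an integrable function on $[G_m\times G_m]$; this yields absolute convergence of $I(s_1,s_2,\Phi,\varphi)$ there.

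For joint meromorphic continuation, note that multiplying the integrand by $s_2(s_2-1)$ removes the at-most-simple poles of $E(\cdot,s_2;\Phi,\omega_\Pi)$ at $s_2=0,1$. The resulting regularized function is then given by an absolutely convergent integral for all $(s_1,s_2)\in\mathbb{C}^2$, and depends holomorphically on both variables by Morera's theorem applied in each variable (with a uniform integrable majorant allowing one to interchange integration and contour integration). Therefore $I(s_1,s_2,\Phi,\varphi)$ extends to a meromorphic function on $\mathbb{C}^2$ whose polar divisor lies in $\{s_2=0\}\cup\{s_2=1\}$ (and is empty when $\omega_\Pi$ is trivial on $\mathbb{A}_K^\times$).

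The main technical step I anticipate is the Siegel-set comparison underlying the rapid decrease of $\varphi\circ J$: one must verify that the Iwasawa decomposition of $(g,g')\in(G_m\times G_m)(\mathbb{A}_K)$, transported through $J$ and conjugated by the permutation matrix, lands in a standard Siegel set of $G_{2m}(\mathbb{A}_K)$, and that the resulting height function dominates the polynomial factor $|\det(g)/\det(g')|^{\operatorname{Re}(s_1)-1/2}_{\mathbb{A}_K}$ uniformly for $s_1$ in compact sets. Once this comparison is cleanly in place, the remainder of the argument is a routine application of the analytic properties of automorphic Eisenstein series.
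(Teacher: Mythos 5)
The paper does not actually prove this statement; it is quoted directly from Bump--Friedberg \cite{BF1990}, and the argument there is the one you reconstruct: absolute convergence from rapid decay of the cusp form against moderate growth of the Eisenstein series, meromorphy inherited from $E(\cdot,s_2;\Phi,\omega_\Pi)$. So your strategy is the standard one.

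Two points in your sketch are thinner than you acknowledge. First, the Siegel-set comparison is not just a matter of a single conjugation by a fixed permutation matrix: if $g=n_1a_1k_1$ and $g'=n_2a_2k_2$ are each in Siegel sets of $G_m$, the interleaved (or, after conjugation, block-diagonal) torus part $\operatorname{diag}(a_1,a_2)$ need not be dominant for $G_{2m}$, since the entries of $a_1$ and $a_2$ do not automatically interlace. One must let a Weyl element $w\in W(G_{2m})$, \emph{depending on the region in the $(a_1,a_2)$-cone}, do the sorting, and then use $G_{2m}(K)$-invariance of $\varphi$; equivalently, phrase the rapid decay of $\varphi$ in terms of a $Z_{2m}(\A_K)G_{2m}(K)$-invariant height $\|\cdot\|_{[G_{2m}]}$ and verify that $\|J(g,g')\|_{[G_{2m}]}$ dominates, up to polynomial factors, the relevant height on the integration domain. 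Second, your Morera step for joint holomorphy of the regularized integral tacitly assumes that $s_2(s_2-1)E(g',s_2;\Phi,\omega_\Pi)$ is bounded by a polynomial in the height of $g'$ with constants \emph{locally uniform in $s_2$}; this uniform moderate growth is a genuine theorem about Eisenstein series (it follows in this rank-one $\GL_1$-Eisenstein case from Tate's thesis, or from general results of Langlands), not an automatic consequence of pointwise holomorphy, and it should be explicitly invoked. Neither point invalidates your plan, but both require more than a passing reference.
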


Theorem $2$ of \cite{BF1990} determines the possible poles of $I(s_1,s_2,\Phi,\varphi)$. 

\begin{prop}\label{1.5}
    The integral \( I(s_1,s_2,\Phi,\varphi) \) is entire if \( \omega_{\Pi} \) is nontrivial on \( \mathbb{I}^{1}_K \). Otherwise, it can have at most simple poles along the lines \( s_2 = -i\delta \) and \( s_2 = 1 - i\delta \).
\end{prop}
We recall the factorization of the global integral $I(s_1,s_2,\Phi,\varphi)$ (given in Section $1$ of \cite{BF1990}).
\begin{prop}\label{1.6}
    If $\varphi\in V_{\Pi}$ is a cusp form, let
    $$ W_{\varphi}(g)=\int\limits_{N_n(K)\backslash N_n(\mathbb{A}_{K})}\varphi(ng)\overline{\Psi'}(n)\>dn $$
    be the associated Whittaker function. For $\Phi\in\mathcal{S}(\mathbb{A}^m_{K})$, the integral $B(s_1,s_2,W_\varphi,\Phi)$ defined as 
    $$ 
    \int\limits_{N_m(\mathbb{A}_K)\backslash G_m(\mathbb{A}_K)}\int\limits_{N_m(\mathbb{A}_K)\backslash G_m(\mathbb{A}_K)}W_{\varphi}(J(g,g'))\Phi(e_m g')\left|\operatorname{det}(g)\right|_{\mathbb{A}_K}^{s_1-1/2}\left|\operatorname{det}(g')\right|_{\mathbb{A}_K}^{s_2-s_1+1/2}\>dg\>dg'
    $$
    converges when $\operatorname{Re}(s_1)$ and $\operatorname{Re}(s_2)$ are sufficiently large and, when this is the case, we have
    \[I(s_1,s_2,\Phi,\varphi)=B(s_1,s_2,W_\varphi,\Phi).\]
\end{prop}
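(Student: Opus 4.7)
The plan is to carry out a standard two-stage unfolding in the region where both sides are absolutely convergent, then to identify the factored expression with $B(s_1, s_2, W_\varphi, \Phi)$ and separately verify absolute convergence of the latter.

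First, I would substitute the Tate-style definition of the Eisenstein series $E(g',s_2;\Phi,\omega_\Pi)$ into $I(s_1,s_2,\Phi,\varphi)$, taking $\operatorname{Re}(s_2)>1$ so the Mellin transform converges absolutely. The theta sum $\Theta'_{\Phi}(a,g')=\sum_{\xi\in K^m\setminus\{0\}}\Phi(a\xi g')$ can be rewritten using the bijection $K^m\setminus\{0\}\cong P_m(K)\backslash G_m(K)$ given by $\gamma\mapsto e_m\gamma$. Folding this $\gamma$-sum into the $g'$-integration passes $[G_m]=Z(\A_K)G_m(K)\backslash G_m(\A_K)$ to $Z(\A_K)P_m(K)\backslash G_m(\A_K)$, using that $\varphi(J(g,\gamma g'))=\varphi(J(g,g'))$ since $J(1,\gamma)\in G_n(K)$ and $|\det\gamma g'|_{\A_K}=|\det g'|_{\A_K}$. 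Since $Z(\A_K)\cap P_m(\A_K)=\{1\}$, combining the resulting $g'$-integration with the $\A_K^\times$-Tate integral (absorbing $a\in\A_K^\times$ via $g'\mapsto aI\cdot g'$ with its contribution tracked through the weight $\omega_\Pi(a)|a|^{ms_2}$) promotes the quotient to $P_m(\A_K)\backslash G_m(\A_K)$, and the integrand becomes $\varphi(J(g,g'))\Phi(e_m g')|\det g'|^{s_2-s_1+1/2}|\det g|^{s_1-1/2}$ after the appropriate factors of $|\det g'|$ are collected from the Eisenstein-series definition.

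Next, I would apply the Fourier--Whittaker expansion of the cusp form $\varphi$ on $G_n(\A_K)$, which reads $\varphi(h)=\sum_{\delta\in N_n(K)\backslash P_n(K)}W_\varphi(\delta h)$. The key combinatorial step is to describe the double coset space $N_n(K)\backslash P_n(K)/J(P_m(K)\times G_m(K))$ (or the one-sided analogue under $J(G_m(K)\times G_m(K))$ once one passes to $N_m(\A_K)\backslash G_m(\A_K)$ in each factor) and show that the sum over $\delta$ precisely unfolds the outer integral from $[G_m]\times(P_m(\A_K)\backslash G_m(\A_K))$ to $(N_m(\A_K)\backslash G_m(\A_K))\times(N_m(\A_K)\backslash G_m(\A_K))$. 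Once this identification is in place, the $\varphi$ in the integrand is replaced by $W_\varphi(J(g,g'))$ and the $\Phi(e_m g')$ remains, giving exactly the integrand of $B(s_1,s_2,W_\varphi,\Phi)$.

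Finally, I would verify that $B(s_1,s_2,W_\varphi,\Phi)$ converges absolutely for $\operatorname{Re}(s_1),\operatorname{Re}(s_2)$ sufficiently large using the standard gauge estimates on Whittaker functions on $N_m(F)\backslash G_m(F)$ together with the rapid decay of $\Phi$ in the Schwartz--Bruhat sense, which are the local inputs placed at each place and then assembled adelically. Since the unfolding steps interchange summations and integrations that are each absolutely convergent in the same region, Fubini justifies the whole manipulation and yields $I(s_1,s_2,\Phi,\varphi)=B(s_1,s_2,W_\varphi,\Phi)$ on that common region of convergence. The main obstacle I anticipate is the bookkeeping in the second paragraph: identifying a clean system of representatives for $N_n(K)\backslash P_n(K)$ that is compatible with the embedding $J$ so that the sum over $\delta$ matches, in one pass, the full integration over $(N_m(\A_K)\backslash G_m(\A_K))^2$ without leftover cosets, and checking that $J$ respects the Whittaker character $\Psi'_n$ in the sense needed to produce $W_\varphi\circ J$ directly.
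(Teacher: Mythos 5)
The paper itself does not prove this proposition; it is imported from Section~1 of \cite{BF1990}, so the only comparison available is with that source, and your sketch identifies the strategy used there: unfold the Eisenstein series via the theta sum and the bijection $K^m\setminus\{0\}\simeq P_m(K)\backslash G_m(K)$, absorb the Tate integral, then apply the cuspidal Whittaker expansion and perform a coset analysis. Your first paragraph is essentially right, with one slip: absorbing the $\mathbb{A}_K^\times$-integral yields $g'$ ranging over $P_m(K)\backslash G_m(\mathbb{A}_K)$, not $P_m(\mathbb{A}_K)\backslash G_m(\mathbb{A}_K)$ as written; no step so far can produce invariance under the full adelic mirabolic, and manufacturing that (modulo $N_m(\mathbb{A}_K)$ only) is precisely what the Whittaker expansion is for. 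You also correctly flag that $\Psi'_n\circ J$ must be checked to be trivial on $N_m\times N_m$; it is, since the interlacing embedding places no entry of $u$ or $u'$ on the superdiagonal of $J(u,u')$.

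The genuine gap is in your second paragraph, which describes the entire content of the Bump--Friedberg unfolding as a task rather than carrying it out. Two concrete points. First, the stabilizer of $P_n(K)$ inside $J(G_m(K)\times G_m(K))$ is $J(G_m(K)\times P_m(K))$, not $J(P_m(K)\times G_m(K))$: one checks $e_nJ(g,g')=e_n$ forces the second argument $g'$ into $P_m$, consistent with $E$ and $\Phi$ being functions of $g'$. Second, and more seriously, there cannot be a clean one-pass bijection of the kind you hope for: over a finite field, $|N_n(K)\backslash P_n(K)|=\prod_{i=1}^{2m-1}(q^i-1)$, whereas the orbit whose stabilizer is $J(N_m\times N_m)$ has size $\prod_{i=1}^{m}(q^i-1)\prod_{i=1}^{m-1}(q^i-1)$, and these disagree as soon as $m\geq 2$. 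So the right action of $J(G_m(K)\times P_m(K))$ on $N_n(K)\backslash P_n(K)$ has several orbits, and the actual work is showing that all but one contribute zero --- a vanishing argument using cuspidality of $\varphi$ and the nontriviality of $\Psi'_n$ on the relevant unipotent directions, in the spirit of the Jacquet--Shalika exterior-square unfolding. ``Without leftover cosets'' is the statement that must be proved, not a bookkeeping wrinkle.
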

Again, the global integrals are related to the local integrals for decomposable vectors. Let \( W_{\varphi} = \prod_{\nu} W_\nu \), where the product runs over all places \( \nu \) of \( K \), and each \( W_\nu \) lies in \( \mathcal{W}(\Pi_\nu, \Psi'_{n,\nu}) \). For almost all unramified places $\nu$, assume that \( W_\nu \) is the normalized spherical Whittaker function. Similarly, let \( \Phi = \prod_{\nu} \Phi_\nu \), where each \( \Phi_\nu \) is a Schwartz function in \( \mathcal{S}(K_\nu^m) \), and for almost all unramified places \( \nu \), \( \Phi_\nu \) is the characteristic function of \( \mathcal{O}_{K_{\nu}}^m \). When $\operatorname{Re}(s_1)$ and $\operatorname{Re}(s_2)$ are sufficiently large,
\[B(s_1,s_2,W_\varphi,\Phi)=\prod\limits_{\nu}B(s_1,s_2,W_\nu,\Phi_\nu),\]
where the local integral $B(s_1,s_2,W_\nu,\Phi_\nu)$ is equal to
\[\int\limits_{N_m(K_{\nu})\backslash G_m(K_{\nu})}\int\limits_{N_m(K_{\nu})\backslash G_m(K_{\nu})}W_{\nu}(J(g,g'))\Phi_\nu(e_m g')\left|\operatorname{det}(g)\right|_{K_\nu}^{s_1-1/2}\left|\operatorname{det}(g')\right|_{K_\nu}^{s_2-s_1+1/2}\>dg\>dg'.\]

The integrals $B(s_1,s_2,W_\nu,\Phi_\nu)$ converge absolutely when $\operatorname{Re}(s_1)$ and $\operatorname{Re}(s_2)$ are sufficiently large \cite{BF1990}.

\begin{flushleft}
    {\textbf{Odd Case $(n=2m+1)$}}
\end{flushleft}

For $\Phi\in\mathcal{S}(\mathbb{A}^m_K)$ and $\varphi\in V_{\Pi}$, consider the global integral 
\[I(s_1,s_2,\Phi,\varphi)=\int\limits_{[G_{m+1}\times \>G_m]}E\left(g,\frac{s_1+ms_2}{m+1};\Phi,\omega_{\Pi}\right)\>\varphi(J(g,g'))\>\left({\frac{\left|\operatorname{det}(g')\right|_{\mathbb{A}_K}}{\left|\operatorname{det}(g)\right|_{\mathbb{A}_K}^{m/m+1}}}\right)^{-s_1+s_2}dg\>dg'.
 \]
 We record Theorem $2$ of \cite{BF1990} below.
\begin{prop}\label{1.7}
    The integral $I(s_1,s_2,\Phi,\varphi)$ is everywhere convergent and admits an analytic continuation in the variables $s_1$ and $s_2$.
\end{prop}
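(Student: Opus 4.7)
The plan is to adapt the strategy used in the even case (Propositions \ref{1.4} and \ref{1.5}), with the crucial new feature that in the odd case the potential residues of the Eisenstein series are annihilated by the cuspidality of $\varphi$, yielding not merely meromorphic but fully holomorphic continuation on all of $\C^2$.

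First, I would establish absolute convergence for $(s_1,s_2)$ outside the two hyperplanes on which the Eisenstein factor $E(g,\tfrac{s_1+ms_2}{m+1};\Phi,\omega_{\Pi})$ has a simple pole, namely $\tfrac{s_1+ms_2}{m+1}\in\{0,1\}$. Since $J: G_{m+1}\times G_m \hookrightarrow G_{2m+1}$ is a closed immersion, the pullback $\varphi\circ J$ of the cusp form remains rapidly decreasing on any Siegel domain in $[G_{m+1}\times G_m]$. Combined with the moderate-growth bound for $E$ off its polar locus and the polynomial growth of the auxiliary determinant character
\[
\left(\tfrac{|\det g|_{\A_K}}{|\det g'|_{\A_K}^{m/(m+1)}}\right)^{-s_1+s_2},
\]
rapid decay dominates polynomial growth, yielding absolute convergence on the complement of these two hyperplanes, and hence a holomorphic function there.

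Next, to continue across the hyperplanes, I would compute the residue of $I(s_1,s_2,\Phi,\varphi)$ along each of them and show it vanishes. At $s=0$ or $s=1$, the residue of $E(g,s;\Phi,\omega_\Pi)$ is (up to a nonzero constant) the character $g\mapsto |\det g|_{\A_K}^{s_0}\omega_\Pi(\det g)$, and is in fact zero unless $\omega_\Pi$ is trivial on $\A_K^{\times}$. Substituting, the residue of $I$ reduces to a period of the shape
\[
c\int_{[G_{m+1}\times G_m]} \varphi(J(g,g'))\>|\det g|_{\A_K}^{a}\>|\det g'|_{\A_K}^{b}\>dg\>dg',
\]
with $a,b$ affine in $(s_1,s_2)$. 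The task is to verify that this period vanishes for every cusp form $\varphi$ on $G_{2m+1}$.

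This vanishing is the heart of the argument and the main obstacle. The strategy is to exploit the asymmetry of $J$ in the odd case: unlike the even embedding, where the diagonal of the centers of the two factors lands in the center of $G_{2m}$, in the odd case the image $J(G_{m+1}\times G_m)$ is visibly contained in a proper Levi of $G_{2m+1}$. Consequently, one can rewrite the period above as an integral involving a constant term of $\varphi$ along the corresponding proper parabolic, and this constant term vanishes by cuspidality. Piecing this vanishing together with the meromorphic continuation inherited from $E(g,\cdot;\Phi,\omega_\Pi)$ then upgrades $I(s_1,s_2,\Phi,\varphi)$ to an entire function of $(s_1,s_2)$, as claimed.
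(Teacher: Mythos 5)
The paper gives no proof for this proposition; it simply cites Theorem 2 of \cite{BF1990}. So the comparison here is against the logic of your own argument.

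Your overall scheme — show absolute convergence off the polar locus of the Eisenstein factor, then show the residues along that locus vanish — is the right shape. The convergence part is fine: rapid decay of the cusp form along $J$ against moderate growth of the Eisenstein series off its poles is a standard estimate. But the residue-vanishing step, which you correctly identify as "the heart of the argument," is where your proof breaks. You claim that because $J(G_{m+1}\times G_m)$ lands in a proper Levi of $G_{2m+1}$, the residual period can be rewritten as a constant term of $\varphi$, which then vanishes by cuspidality. This conflates two fundamentally different objects. A period of $\varphi$ over (a conjugate of) $[M]$ for a proper Levi $M$ is an integral over $M(K)\backslash M(\A_K)$ (mod center); a constant term of $\varphi$ along the parabolic $P=MN$ is an integral over the compact quotient $N(K)\backslash N(\A_K)$. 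Cuspidality kills the latter, not the former. There is no general mechanism that turns a Levi period into a constant term, and the claimed "asymmetry" between the even and odd embeddings is illusory: in the even case $J(G_m\times G_m)$ is \emph{also} contained in a proper Levi of $G_{2m}$. Yet the even-case residual period is precisely the Friedberg--Jacquet (linear) period, which is famously \emph{non-zero} for many cuspidal $\Pi$ — it detects the pole of $L(s,\Pi,\wedge^2)$ at $s=1$. If your argument were valid, it would show the even-case integral is also entire, contradicting Proposition \ref{1.5}.

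The actual mechanism for the vanishing in the odd case in \cite{BF1990} is different and rests on the structure of the residual integral after unfolding, in particular the interaction between the central direction $(G_1\times G_1)/\Delta G_1$ (which survives modding out $Z(G_n)$ only in the odd case with an imbalanced determinant character) and the Tate-type integral produced by the residue of the Eisenstein series. To repair the proof you would need to replace the Levi-period-equals-constant-term claim with a genuine computation of the residue along each hyperplane, tracking the determinant exponents and showing that the resulting central integral forces the residue to be zero.
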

We recall the factorization of the global integral $I(s_1,s_2,\Phi,\varphi)$ (given in Section $1$ of \cite{BF1990}).
\begin{prop}\label{1.8}
    If $\varphi\in V_{\Pi}$ is a cusp form, then let
    $$ W_{\varphi}(g)=\int\limits_{N_n(K)\backslash N_n(\mathbb{A}_{K})}\varphi(ng)\overline{\Psi'}(n)\>dn $$
    be the associated Whittaker function. For $\Phi\in\mathcal{S}(\mathbb{A}^m_{F})$, the integral $B(s_1,s_2,W_\varphi,\Phi)$ defined as 
    $$ 
    \int\limits_{N_m(\mathbb{A}_K)\backslash G_{m}(\mathbb{A}_K)}\int\limits_{N_{m+1}(\mathbb{A}_K)\backslash G_{m+1}(\mathbb{A}_K)}W_{\varphi}(J(g,g'))\Phi(e_{m+1} g)\left|\operatorname{det}(g)\right|_{\mathbb{A}_K}^{s_1}\left|\operatorname{det}(g')\right|_{\mathbb{A}_K}^{s_2-s_1}\>dg\>dg'
    $$
    converges when $\operatorname{Re}(s_1)$ and $\operatorname{Re}(s_2)$ are sufficiently large and, when this is the case, we have
    \[I(s_1,s_2,\Phi,\varphi)=B(s_1,s_2,W_\varphi,\Phi).\]
\end{prop}
As before, the global integrals factorize for decomposable vectors. Let \( W_{\varphi} = \prod_{\nu} W_\nu \), where the product runs over all places \( \nu \) of \( K \), and each \( W_\nu \) belongs to \( \mathcal{W}(\Pi_\nu, \Psi'_{n,\nu}) \). For almost all unramified places $\nu$, assume that \( W_\nu \) is the normalized spherical Whittaker function. Similarly, let \( \Phi = \prod_{\nu} \Phi_\nu \), where each \( \Phi_\nu \) is a Schwartz function in \( \mathcal{S}(K_\nu^m) \), and for almost all unramified places \( \nu \), \( \Phi_\nu \) is the characteristic function of \( \mathcal{O}_{K_{\nu}}^m \). When $\operatorname{Re}(s_1)$ and $\operatorname{Re}(s_2)$ are sufficiently large,
\[B(s_1,s_2,W_\varphi,\Phi)=\prod\limits_{\nu}B(s_1,s_2,W_\nu,\Phi_\nu),\]
where the local integral $B(s_1,s_2,W_\nu,\Phi_\nu)$ is equal to
\[\int\limits_{N_m(K_{\nu})\backslash G_m(K_{\nu})}\int\limits_{N_{m+1}(K_{\nu})\backslash G_{m+1}(K_{\nu})}W_{\varphi}(J(g,g'))\Phi(e_{m+1} g)\left|\operatorname{det}(g)\right|_{K_\nu}^{s_1}\left|\operatorname{det}(g')\right|_{K_\nu}^{s_2-s_1}\>dg\>dg'.\]

The integrals $B(s_1,s_2,W_\nu,\Phi_\nu)$ converge absolutely when $\operatorname{Re}(s_1)$ and $\operatorname{Re}(s_2)$ are sufficiently large \cite{BF1990}.

Now let \( n \) be arbitrary (even or odd, with no restriction). Below, we summarize the unramified computation for the local Bump-Friedberg integrals from \cite{BF1990}.
\begin{prop}\label{3.7}
    Let \(\nu\) be a place of \(K\), and suppose that \(\Pi_\nu\) is an unramified, non-archimedean representation. Consider the normalized spherical Whittaker function \( W_\nu \in \mathcal{W}(\Pi_\nu, {\Psi}'_{n,\nu}) \) and let \(\Phi_\nu \in \mathcal{S}(K_\nu^m)\) be the characteristic function of \( \mathcal{O}_{K_{\nu}}^m \). The Bump–Friedberg integral is then given by  

    \[
    B(s_1,s_2,W_\nu,\Phi_\nu) = L(s_1, \Pi_\nu)L(s_2, \Pi_\nu, \wedge^2).
    \]
\end{prop}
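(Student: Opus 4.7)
The plan is to follow the classical unramified computation of Bump--Friedberg \cite{BF1990}, which reduces the integral to a generating-function manipulation via the Iwasawa decomposition and the Casselman--Shalika formula. The argument proceeds uniformly in the even ($r=m$) and odd ($r=m+1$) cases, with only minor differences in indexing. First, I would invoke the Iwasawa decomposition $G_r(K_\nu) = N_r(K_\nu)A_r(K_\nu)K_r$ to rewrite each of the two outer integrals as an integral over the diagonal torus $A_r(K_\nu)$. This reduction is legitimate because $W_\nu$ is right $K_n$-invariant (since $\Pi_\nu$ is unramified), the restriction of $J$ to $K_m \times K_r$ lands in $K_n$, and $\Phi_\nu(e_r \cdot)$ is right $K_r$-invariant. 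Each Iwasawa change of variables contributes an inverse modular factor $\delta_{B_r}^{-1}$.

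Next, I would parametrize each torus element by its valuations: write $t = \mathrm{diag}(\varpi^{a_1}, \ldots, \varpi^{a_m})$ and similarly $t' = \mathrm{diag}(\varpi^{a'_1}, \ldots, \varpi^{a'_r})$. The support condition $\Phi_\nu(e_r t') \neq 0$ forces $a'_r \geq 0$. A direct computation shows that $J(t, t')$ is itself diagonal in $G_n(K_\nu)$, with diagonal entries being the interleaving $(\varpi^{a_1}, \varpi^{a'_1}, \varpi^{a_2}, \varpi^{a'_2}, \ldots)$. The Casselman--Shalika formula then evaluates $W_\nu(J(t, t'))$ as $\delta_{B_n}^{1/2}(J(t, t'))$ times the Schur polynomial $s_\lambda(\alpha_1, \ldots, \alpha_n)$ in the Satake parameters of $\Pi_\nu$, where $\lambda$ records the valuations of these diagonal entries. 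In particular, the integrand vanishes unless those valuations are weakly decreasing, i.e.\ unless $a_1 \geq a'_1 \geq a_2 \geq a'_2 \geq \cdots$.

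The final step is to assemble the pieces and evaluate the resulting sum. After the $\delta_{B_n}^{1/2}$-factor from Casselman--Shalika cancels against the two $\delta_{B_r}^{-1}$-factors from Iwasawa (with the surplus absorbed into the $|\det|^{s_i}$ factors), the weighted double sum over admissible tuples $(a, a')$ should reduce to
\[
\prod_{i=1}^n \frac{1}{1 - \alpha_i q_\nu^{-s_1}} \cdot \prod_{1 \leq i < j \leq n} \frac{1}{1 - \alpha_i \alpha_j q_\nu^{-s_2}} \;=\; L(s_1, \Pi_\nu)\, L(s_2, \Pi_\nu, \wedge^2).
\]
The main obstacle is exactly this last combinatorial step: identifying the generating series $\sum s_\lambda(\alpha)\, q_\nu^{-s_1 \sum a_i - s_2 \sum a'_j}$ over the interleaving cone $a_1 \geq a'_1 \geq a_2 \geq \cdots \geq a'_r \geq 0$ with the product displayed above. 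This amounts to a Littlewood-type symmetric-function identity that forms the technical heart of the Bump--Friedberg computation; the even and odd cases differ only in the boundary behavior at the last coordinate and are handled in parallel.
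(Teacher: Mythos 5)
Your overall strategy --- Iwasawa decomposition, Casselman--Shalika formula, then a combinatorial generating-series identity --- is the standard one and is the argument of \cite{BF1990}, which is all the paper cites for this proposition (it contains no independent proof). However, the exponent you write in the final generating series is wrong. Because the integrand carries $|\det g|^{s_1-1/2}|\det g'|^{s_2-s_1+1/2}$ in the even case (with the analogous shift in the odd case), after the $\delta_{B_m}^{-1}$-factors from Iwasawa and the $\delta_{B_n}^{1/2}$-factor from Casselman--Shalika are absorbed, the surviving coefficient of $a_i$ in the $q_\nu$-exponent is exactly $s_1$ and that of $a'_j$ is exactly $s_2-s_1$, not $s_2$. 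So the weight at a dominant torus point is $q_\nu^{-s_1\sum_i(a_i-a'_i)-s_2\sum_j a'_j}$, i.e.\ $q_\nu^{-s_1(\lambda_1-\lambda_2+\lambda_3-\cdots)-s_2(\lambda_2+\lambda_4+\cdots)}$, not $q_\nu^{-s_1\sum_i a_i-s_2\sum_j a'_j}$ as you wrote.

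The difference is not cosmetic: already for $n=2$ your weight gives $\bigl((1-\alpha_1 q_\nu^{-s_1})(1-\alpha_2 q_\nu^{-s_1})(1-\alpha_1\alpha_2 q_\nu^{-s_1-s_2})\bigr)^{-1}$, which is not $L(s_1,\Pi_\nu)L(s_2,\Pi_\nu,\wedge^2)$. The symmetric-function identity actually needed is
\[
\sum_{\lambda}s_\lambda(\alpha_1,\ldots,\alpha_n)\,u^{\lambda_1-\lambda_2+\lambda_3-\cdots}\,v^{\lambda_2+\lambda_4+\cdots}\;=\;\prod_{i=1}^n\frac{1}{1-\alpha_i u}\prod_{1\le i<j\le n}\frac{1}{1-\alpha_i\alpha_j v},
\]
with $u=q_\nu^{-s_1}$, $v=q_\nu^{-s_2}$, the sum running over partitions of length at most $n$; this is the Littlewood-type identity at the heart of \cite{BF1990}. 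With this correction the rest of your sketch is sound.
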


We now prove Theorem $\ref{main2}$ in the even case. The proof in the odd case is similar and is omitted.

\section*{Proof of Theorem \ref{main2}}
First assume that $E$ and $F$ are non-archimedean. In \cite{MY2013}, Miyauchi and Yamauchi showed that there exist $W\in\mathcal{W}(\pi,{\psi}'_{n})$ and $\Phi\in\mathcal{S}(F^n)$ such that 
    \[B(s_1,s_2,W,\Phi)=L(s_1,\pi)L_f(s_2,\pi,\wedge^2)\]
    where if the standard $L$-function of $\pi$ is written as
    \[
L(s_1, \pi) =\prod_{i=1}^n(1-\alpha_i q_F^{-s_1})^{-1},\ \alpha_i \in \C,
\]
then
$L_f(s_2,\pi,\wedge^2)$ denotes the formal exterior square $L$-function of $\pi_{\nu}$ defined by
\[
L_f(s_2,\pi,\wedge^2)
= \prod_{1 \leq i < j \leq n} (1-\alpha_i \alpha_j q_F^{-s_2})^{-1}.
\]

It is clear from the expressions of $L(s_1,\pi)$ and $L_f(s_2,\pi,\wedge^2)$ that $B(s_1,s_2,W,\Phi)$ is the reciprocal of a polynomial in $q_F^{-s_1}$ and $q_F^{-s_2}$. Hence, it is non-vanishing in $\mathbb{C}\times\mathbb{C}$.

Next, we assume that $E$ and $F$ are archimedean. Using Theorem $7.9$ and Proposition $7.10$ of \cite{HJ2024}, there exists a non-zero complex polynomial $p_{\pi,\wedge^2}$ (dependent on $\pi$), $W\in\mathcal{W}(\pi,{\psi}'_{n})$ and $\Phi\in\mathcal{S}(F^m)$ such that
\[{B(s_1,s_2,W,\Phi)}=p_{\pi,\wedge^2}(s_2)L(s_1,\pi)L(s_2,\pi,\wedge^2).\]
We claim that the product $p_{\pi,\wedge^2}(s_2)L(s_2,\pi,\wedge^2)$ does not vanish at any $s_2\in\C$. We assume that $\pi$ is ramified otherwise $p_{\pi,\wedge^2}\equiv 1$, and our claim would hold. 
For $\pi =\pi_1\times\pi_2\cdots\times\pi_r$ with $\pi_i$ essentially square-integrable, 
\[L(s_2,\pi,\wedge^2) = \prod_{j=1}^{r} L(s_2,\pi_j,\wedge^2) \prod_{1 \leq j < \ell \leq r} L(s_2, \pi_j \times \pi_{\ell})\]
by the multiplicativity of the $L$-factors \cite{MAT2017}. Therefore, by the proof of Proposition 7.10 in \cite{HJ2024},
\[p_{\pi,\wedge^2}(s_2)=\prod_{j = 1}^{r} p_{\pi_j,\wedge^2}(s_2) \prod_{1 \leq j < \ell \leq r} p_{\pi_j\times\pi_{\ell},\mathrm{RS}}(s_2),\]
where $p_{\pi_j,\wedge^2}$ is the polynomial as defined in Proposition $7.10$ of \cite{HJ2024}. 
So we can restrict ourselves to the case where $\pi$ is essentially square-integrable.

If $F=\mathbb{C}$ so that $\pi= e^{i\kappa \arg} |\cdot|_{\C}^{t}$, then $p_{\pi,\wedge^2}\equiv 1$. Similarly, if $F = \R$ and $\pi=\operatorname{sgn}^{\kappa} |\cdot|_{\R}^{t}$, then $p_{\pi,\wedge^2}\equiv 1$. Finally, if $F= \R$ and $\pi = D_{\kappa} \otimes \left|\det\right|_{\R}^{t}$, we have that
\[p_{\pi,\wedge^2}(s_2)L(s_2,\pi,\wedge^2)=\zeta_{\R}(s_2+2t+\kappa),\]
which is non-vanishing for any $s_2\in\mathbb{C}$. This completes the proof.

\begin{remark}
    The local Whittaker functions chosen in Theorems \ref{main1} and \ref{main2} correspond to local newforms. These include archimedean newforms as defined in \cite{HU2024} and non-archimedean newforms as outlined in \cite{JPSS1981}. Consequently, these newforms not only serve as weak test vectors (\cite{Jo2023},\cite{HJ2024}) for local Rankin-Selberg, Flicker, and Bump-Friedberg integrals but also ensure that their corresponding local integrals are non-vanishing everywhere. 
\end{remark}

\section*{Proof of Corollary \ref{coro2}}

Assume \(n\) is even. The argument for \(n\) odd is identical. Fix an $s_0\in\mathbb{C}$. Let $T$ be a finite set of places of $K$ containing the archimedean places of $K$. Let $T_u$ be the set of places $\nu\notin T$ such that $\Pi_\nu$ is unramified. For each place \( \nu \in T_u \), choose \( W_\nu \in \mathcal{W}(\Pi_\nu,{\Psi}'_{n,\nu}) \) as the normalized spherical Whittaker function, and let \( \Phi_{\nu} \in \mathcal{S}(K^m_\nu) \) be the characteristic function of \( \mathcal{O}_{K_{\nu}}^m \).

Let $T_r$ be the set of places $\nu\notin T$ such that $\Pi_\nu$ is ramified. For every place $\nu\in T_r$, we choose $W_\nu\in\mathcal{W}(\Pi_\nu,{\Psi}_{n,\nu})$ and $\Phi_\nu\in\mathcal{S}(K^m_{\nu})$ such that 
$$\frac{B(s_0,W_\nu,\Phi_\nu)}{L(s_0,\Pi_\nu,\mathrm{BF})}\neq 0.$$ Here $B(s,W_{\nu},\Phi_{\nu})$ denotes the integral $B(s,2s,W_\nu,\Phi_\nu)$. 
This choice is possible because $L(s,\Pi_\nu,\mathrm{BF})$ is the greatest common divisor of the family of local Bump-Friedberg integrals (\cite{MAT2013}). 
Finally for $\nu\in T$, using Theorem \ref{main2}, we choose $\Phi_\nu$ and $W_{\nu}$ so that the local integrals $B(s,W_\nu,\Phi_\nu)$ do not vanish at any complex number $s\in\mathbb{C}$. 

Choose $\varphi\in V_{\Pi}$ such that the associated global Whittaker function $W_{\varphi}$ is 
$\prod_{\nu}W_{\nu},$
and choose $\Phi\in\mathcal{S}(\mathbb{A}^m_F)$ to be $\Phi=\prod_{\nu}\Phi_\nu.$ Once again, such a choice of $\varphi$ is possible due to the Fourier expansion of an automorphic form for $\operatorname{GL}_n$ \cite{PS1979, Sha1974}. Using Proposition \ref{3.7} and the Euler product decomposition, we obtain
\begin{equation}
    I(s,\Phi,\varphi)\coloneq I(s,2s,\Phi,\varphi)=L^{T}(s,\Pi,\mathrm{BF})\prod\limits_{\nu\in T_r}\frac{B(s,W_\nu,\Phi_\nu)}{L(s,\Pi_\nu,\mathrm{BF})}\prod\limits_{\nu\in T}B(s,W_\nu,\Phi_\nu).
\end{equation}
 
 Note that the finite product $\prod\limits_{\nu\in T}B(s,W_\nu,\Phi_\nu)$ as a function of $s$ is nowhere vanishing by Theorem \ref{main2} and the finite product
 $$\prod\limits_{\nu\in T_r}\frac{B(s,W_\nu,\Phi_\nu)}{L(s,\Pi_\nu,\mathrm{BF})}$$
 is non-vanishing at $s_0\in\mathbb{C}$.

We can rewrite equation $4.8$ as
\[\frac{1}{\prod\limits_{\nu\in T_r}\frac{B(s,W_\nu,\Phi_\nu)}{L(s,\Pi_\nu,\mathrm{BF})}\prod\limits_{\nu\in T}B(s,W_\nu,\Phi_\nu)}=\frac{L^{T}(s,\Pi,\mathrm{BF})}{I(s,\Phi,\varphi)}\]
so that by our choice of ${\varphi}$ and $\Phi$, the quotient $\frac{L^{T}(s,\Pi,\mathrm{BF})}{I(s,\Phi,\varphi)}$ is holomorphic at $s_0\in\mathbb{C}$. As the choice of $s_0$ was arbitrary, this quotient is, in fact, entire. By Proposition \ref{1.5}, if \( \omega_{\Pi} \) is nontrivial on \( \mathbb{I}^{1}_K \), then \( I(s,\Phi,\varphi) \) is entire for all choices of \( \Phi \) and \( \varphi \), implying that \( L^{T}(s,\Pi,\mathrm{BF}) \) is also entire. Otherwise, \( I(s,\Phi,\varphi) \) can have at most simple poles at \( s = -\frac{i\delta}{2} \) and \( s = \frac{1 - i\delta}{2} \), and consequently, so can \( L^{T}(s,\Pi,\mathrm{BF}) \).

\section*{Acknowledgements}  I would like to express my sincere gratitude to my advisor, Professor Ravi Raghunathan, for his unwavering encouragement. I am also grateful to Professor Dipendra Prasad for his valuable suggestions, which significantly improved this paper.

\section*{Funding}
The author gratefully acknowledges the financial support provided by the Ministry of Human Resource Development (MHRD) during the course of his Ph.D. studies.



\end{document}